\newtheorem{theorem}{Theorem}[section]
\newtheorem{lemma}[theorem]{Lemma}
\newtheorem{definition}[theorem]{Definition}
\def\MM{\mathcal{M}}
\def\pone{\hbox{\rm\textsf{1}\hspace*{-0.9ex}\rule{0.15ex}{1.3ex}\hspace*{0.9ex}}}
\begin{document}
\title[Geodesic completeness of affine surfaces]
{Geodesic completeness and the quasi-Einstein equation for locally homogeneous affine surfaces}
\author{P. B. Gilkey and X. Valle-Regueiro}
\thanks{Research partially supported by Project MTM2016-75897-P (AEI/FEDER, UE)}
\address{PG: Mathematics Department, University of Oregon, Eugene OR 97403-1222, USA}
\email{gilkey@uoregon.edu}
\address{XV: Faculty of Mathematics,
University of Santiago de Compostela,
15782 Santiago de Compostela, Spain}
 \email{javier.valle@usc.es}

\begin{abstract}Let $\mathcal{M}$ be a Type~$\mathcal{A}$ affine surface.
We show that $\mathcal{M}$ is linearly strongly projectively flat.
We use the quasi-Einstein equation together with the condition that $\mathcal{M}$
is strongly projectively flat to examine to examine the geodesic completeness
of $\mathcal{M}$.\\Subject classification: 53C21,  35R01, 58J60, 58D27.\\
Keywords: strongly projectively flat, quasi-Einstein equation, geodesic completeness, locally homogeneous affine surface.\end{abstract}
\maketitle
\section{Affine geometry}A pair $\mathcal{M}=(M,\nabla)$ is said to be an {\it affine surface} if
 $\nabla$ is a torsion free connection on the tangent bundle
of a smooth surface $M$. A map from one affine surface to another is said to be an {\it affine map} if it intertwines the two connections.
An affine surface is said to be {\it locally homogeneous} if
given any two points of the surface, there is the germ of an affine diffeomorphism 
taking one point to the other. 
Let $(x^1,x^2)$ be local coordinates on an affine surface. Adopt the {\it Einstein convention}
and sum over repeated indices to expand
$\nabla_{\partial_{x^i}}\partial_{x^j}=\Gamma_{ij}{}^k\partial_{x^k}$
in terms of the {\it Christoffel symbols}; the condition that
$\nabla$ is torsion free is equivalent to the symmetry $\Gamma_{ij}{}^k=\Gamma_{ji}{}^k$.
We have the following classification result due to Opozda~\cite{Op04}.
\begin{theorem}\label{T1}
Let $\mathcal{M}=(M,\nabla)$ be a locally homogeneous affine surface. At least one of the following
three possibilities holds for the local geometry:
\smallbreak\noindent$\mathcal{A}$. There exist local coordinates $(x^1,x^2)$ so that
$\Gamma_{ij}{}^k=\Gamma_{ji}{}^k$ is constant.
\smallbreak\noindent$\mathcal{B}$. There exist local coordinates $(x^1,x^2)$ so that
$\Gamma_{ij}{}^k=(x^1)^{-1}C_{ij}{}^k$ where we have $C_{ij}{}^k=C_{ji}{}^k$ constant.
\smallbreak\noindent$\mathcal{C}$. $\nabla$ is the Levi-Civita of the round sphere.
\end{theorem}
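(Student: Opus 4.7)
The plan is to analyze the Lie algebra $\mathfrak{K}_p$ of germs at $p\in M$ of \emph{affine Killing vector fields}, i.e.\ vector fields whose local flows preserve $\nabla$. Local homogeneity translates into the infinitesimal condition that the evaluation map $\mathrm{ev}_p\colon\mathfrak{K}_p\to T_pM$ is surjective, so $d:=\dim\mathfrak{K}_p\geq 2$. I would split the argument on whether $d=2$ or $d\geq 3$.

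When $d=2$, $\mathrm{ev}_p$ is an isomorphism, and $\mathfrak{K}_p$ must be one of the two real $2$-dimensional Lie algebras. If it is abelian, pick a basis of commuting Killing fields and rectify them simultaneously to $X_i=\partial_{x^i}$; the Killing condition $L_{X_i}\nabla=0$ becomes $\partial_{x^i}\Gamma_{jk}{}^\ell=0$, so each $\Gamma_{jk}{}^\ell$ is constant, which is Type~$\mathcal{A}$. If it is the non-abelian solvable algebra with generators $X,Y$ satisfying $[X,Y]=Y$, straighten $Y=\partial_{x^2}$ and use the bracket relation to normalise $X=x^1\partial_{x^1}+x^2\partial_{x^2}$; the two Killing equations then force $\Gamma_{ij}{}^k$ to be independent of $x^2$ and homogeneous of degree $-1$ in $x^1$, yielding $\Gamma_{ij}{}^k=(x^1)^{-1}C_{ij}{}^k$ for constants $C_{ij}{}^k$, which is Type~$\mathcal{B}$.

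When $d\geq 3$, there is a nontrivial isotropy $\mathfrak{g}_p:=\ker\mathrm{ev}_p$ acting linearly on $T_pM$ and preserving $\nabla$ together with all of its curvature-type invariants. If one can find a $2$-dimensional subalgebra $\mathfrak{h}\subset\mathfrak{K}_p$ complementary to $\mathfrak{g}_p$ and closed under the bracket, the preceding analysis applies directly on $\mathfrak{h}$ and gives Type~$\mathcal{A}$ or~$\mathcal{B}$. The genuinely new situation is when no such splitting exists: here one must classify the admissible abstract Lie algebra structures on $\mathfrak{K}_p$ together with their isotropy embeddings in $\mathfrak{gl}(T_pM)$, and show that the only possibility not already covered is that $\mathfrak{K}_p\simeq\mathfrak{so}(3)$ acts as the isometry algebra of a Riemannian metric of constant positive curvature, so that $\nabla$ is locally the Levi-Civita connection of the round sphere (Type~$\mathcal{C}$).

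The main obstacle is this last step: ruling out all intermediate Lie-algebraic configurations in dimensions $3$, $4$, and $6$ that neither admit a two-dimensional transitive subalgebra (reducing to $\mathcal{A}$ or $\mathcal{B}$) nor correspond to the round sphere. The only effective tool for this is a combination of the Jacobi identity on $\mathfrak{K}_p$, the invariance of the full curvature tensor under the isotropy representation, and a dimension count on the space of torsion-free connections compatible with a prescribed transitive action. This is exactly the step where Opozda's argument in~\cite{Op04} does its real work.
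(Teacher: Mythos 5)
The paper offers no proof of this statement: it is quoted verbatim as Opozda's classification theorem and the reference \cite{Op04} is cited for it, so there is no internal argument to compare yours against. Judged on its own terms, your outline follows the standard (and indeed Opozda's) strategy through the Lie algebra of affine Killing vector fields, and the two--dimensional case is handled essentially correctly: simultaneous rectification of a commuting basis gives Type~$\mathcal{A}$, and normalising the non--abelian pair to $\partial_{x^2}$ and $x^1\partial_{x^1}+x^2\partial_{x^2}$ forces the $(x^1)^{-1}$--homogeneity of Type~$\mathcal{B}$. (One point you pass over quickly even here: the definition of local homogeneity in the paper is via germs of affine diffeomorphisms, and deducing that the evaluation map $\mathrm{ev}_p\colon\mathfrak{K}_p\to T_pM$ is surjective requires an argument --- one must produce a differentiable family of such germs, or invoke the real--analyticity of locally homogeneous affine structures.)

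The genuine gap is the case $\dim\mathfrak{K}_p\ge 3$ with no two--dimensional subalgebra mapping onto $T_pM$. There you do not give a proof; you state what would have to be proved (``one must classify the admissible abstract Lie algebra structures\dots and show that the only possibility not already covered is $\mathfrak{so}(3)$\dots'') and then explicitly defer to Opozda. That deferred step is not a technicality: it is the entire content of the theorem beyond the easy cases. Concretely, one must (i) show that every transitive Killing algebra of dimension $\ge 3$ other than $\mathfrak{so}(3)$ contains a two--dimensional subalgebra that is still transitive near $p$ (mere complementarity to the isotropy is not enough --- you need a subalgebra, and you need to rule out, case by case, the solvable and $\mathfrak{sl}(2,\mathbb{R})$--type configurations where the obvious complements fail to close under the bracket), and (ii) in the $\mathfrak{so}(3)$ case, show that invariance of a torsion--free connection under the rotation action forces $\nabla$ to be the Levi--Civita connection of the round metric rather than some other invariant connection. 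Neither step is carried out, so as written the proposal is a plan for a proof of the hard case rather than a proof. If your intent is simply to cite Opozda for Theorem~\ref{T1}, as the paper does, that is legitimate; but then the proposal should say so rather than present the citation as the final step of an argument.
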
 

We say that $\mathcal{M}$ is a Type~$\mathcal{A}$ model if
$\mathcal{M}=(\mathbb{R}^2,\nabla)$ where $\nabla$ is Type~$\mathcal{A}$, i.e the Christoffel symbols
$\Gamma_{ij}{}^k\in\mathbb{R}$. Let $\mathbb{R}^2$ be the group of translations acting on itself; a connection
$\nabla$ on $\mathbb{R}^2$ is Type~$\mathcal{A}$ if $\nabla$ is left-invariant, i.e. the translations are affine maps.
Since $\nabla$ is torsion free, $\Gamma_{12}{}^1=\Gamma_{21}{}^1$
and $\Gamma_{12}{}^2=\Gamma_{21}{}^2$.
Thus there are 6 free parameters and we may identify
the set of {\it Type~$\mathcal{A}$ models} with $\mathbb{R}^6$ by setting
$\mathcal{M}(a,b,c,d,e,f)=(\mathbb{R}^2,\nabla)$ where the Christoffel symbols are given by
$$\Gamma_{11}{}^1=a,\Gamma_{11}{}^2=b,
\Gamma_{12}{}^1=\Gamma_{21}{}^1=c,\Gamma_{12}{}^2=\Gamma_{21}{}^2=d,\Gamma_{22}{}^1=e,\Gamma_{22}{}^2=f\,.
$$
The notion of a Type~$\mathcal{B}$ or Type~$\mathcal{C}$
model is defined similarly.
The general linear group $\operatorname{Gl}(2,\mathbb{R})$ acts on the set of Type~$\mathcal{A}$ models by change of variables;
we say that two Type~$\mathcal{A}$ models are {\it linearly equivalent} if they differ
by a linear action.
There are surfaces which are both Type~$\mathcal{A}$ and Type~$\mathcal{B}$
which are not flat. Any such geometry is, up to linear equivalence, one of the structures  $\mathcal{M}_1^1$,
$\mathcal{M}_2^1(c_1)$, $\mathcal{M}_3^1(c_1)$, or $\mathcal{M}_4^1(c)$ to be described
presently in Definition~\ref{D1}; we refer to \cite{BGG18} for further details. The Type~$\mathcal{C}$ geometry is neither
Type~$\mathcal{A}$ nor Type~$\mathcal{B}$.
The curvature operator $R$ and
 the Ricci tensor $\rho$ of an affine surface are given by
\begin{eqnarray*}
&&R(\xi_1,\xi_2)=\nabla_{\xi_1}\nabla_{\xi_2}-\nabla_{\xi_2}\nabla_{\xi_1}-\nabla_{[\xi_1,\xi_2]},\\
&&\rho(\xi_1,\xi_2)=\operatorname{Tr}\{\xi_3\rightarrow R(\xi_3,\xi_1)\xi_2\}\,.
\end{eqnarray*}
In general, the Ricci tensor of an affine surface need not be symmetric. However,
in the Type~$\mathcal{A}$ setting, the Ricci tensor is symmetric and is given by
\begin{equation}\label{E1.1}\begin{array}{l}
\rho_{11}=(\Gamma_{11}{}^1-\Gamma_{12}{}^2) \Gamma_{12}{}^2
+\Gamma_{11}{}^2 (\Gamma_{22}{}^2-\Gamma_{12}{}^1),\\
\rho_{12}=\rho_{21}=\Gamma_{12}{}^1 \Gamma_{12}{}^2-\Gamma_{11}{}^2 \Gamma_{22}{}^1,\\
\rho_{22}=-(\Gamma_{12}{}^1)^2+\Gamma_{22}{}^2 \Gamma_{12}{}^1
+(\Gamma_{11}{}^1-\Gamma_{12}{}^2) \Gamma_{22}{}^1\,.
\end{array}\end{equation}

We say that a curve $\sigma$ in an affine surface is a {\it geodesic} if $\nabla_{\dot\sigma}\dot\sigma=0$,
i.e. $\ddot\sigma^i+\Gamma_{jk}{}^i\dot\sigma^j\dot\sigma^k=0$ for all $i$. If $\nabla$ is the Levi-Civita
connection of a Riemannian metric, geodesics locally minimize length. There is no such interpretation in 
affine geometry. An affine surface is said to be {\it geodesically complete}
if every geodesic $\sigma$ is defined for all $t\in\mathbb{R}$; otherwise the surface is said to be
 {\it geodesically incomplete}. We shall concentrate on the Type~$\mathcal{A}$ geometries
 so that the geodesic equation
is a pair of quadratic ODEs with constant coefficients. However, even with this restriction, it is still difficult
to solve these equations directly. Instead, we shall first discuss the notion of strongly projectively flat geometries
and show in Lemma~\ref{L1} that any Type~$\mathcal{A}$ geometry is strongly projectively flat. We shall then
introduce the quasi-Einstein equation and present its basic properties in Theorem~\ref{T2}. This will enable us to give
a classification of the Type~$\mathcal{A}$ geometries in Theorem~\ref{T3} which we will use to determine
which Type~$\mathcal{A}$ geometries are geodesically complete in Theorem~\ref{T5}; this gives a different
treatment of a result originally established by D'Ascanio et al.~\cite{DGP17} using different methods.

\section{Strongly projectively flat geometries}
Two affine connections $\nabla$ and $\tilde\nabla$
are said to be {\it projectively equivalent}\index{projectively equivalent} if there exists a smooth $1$-form $\omega$ so
\smallbreak\centerline{
$\nabla_XY= \tilde\nabla_XY+\omega(X)Y+\omega(Y)X\text{ for all }X,Y$.}
We remark that $\nabla$ and $\tilde\nabla$ have the same unparametrized geodesics if and only if they
are projectively equivalent (see Kobayashi and Nomizu~\cite{KN63}); reparametrization can, of course, affect geodesic completeness.
If $\omega=dg$ for some smooth function $g$,
then $\nabla$ and $\tilde\nabla$ are said to be {\it strongly projectively equivalent}. If $\mathcal{M}=(M,\nabla)$, then
we set ${}^g\mathcal{M}:=(M,\tilde\nabla)$ in this setting.
If $\nabla$ is strongly projectively equivalent to a flat connection, then 
$\mathcal{M}$ is said to be {\it strongly projectively flat}.
\begin{lemma}\label{L1}
Let $\mathcal{M}=(\mathbb{R}^2,\nabla)$ be a Type~$\mathcal{A}$ model. 
There exists a linear function $g(x^1,x^2)=a_1x^1+a_2x^2$ which provides
a strong projective equivalence from $\mathcal{M}$ to a flat Type~$\mathcal{A}$ model.
\end{lemma}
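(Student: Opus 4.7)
The plan is to exhibit the linear $g = a_1 x^1 + a_2 x^2$ directly. For any such $g$, the strong projective shift gives
$$
\tilde\Gamma_{ij}{}^k = \Gamma_{ij}{}^k - a_i\,\delta_j^k - a_j\,\delta_i^k,
$$
which is again constant, so ${}^g\mathcal{M}$ is automatically Type~$\mathcal{A}$. In dimension two, the antisymmetry of $R(\xi_1,\xi_2)$ in its first two arguments together with the trace definition of $\rho$ implies that flatness is equivalent to $\rho=0$. The task therefore reduces to choosing $(a_1,a_2)\in\mathbb{R}^2$ so that $\tilde\rho\equiv 0$.

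A direct computation, expanding the Ricci formula for the new (constant) Christoffel symbols, gives the clean transformation law
$$
\tilde\rho_{jk}=\rho_{jk}-a_l\,\Gamma_{jk}{}^l+a_ja_k.
$$
Writing this out in the parametrization $(a,b,c,d,e,f)$ with the $\rho_{ij}$ supplied by Equation~(\ref{E1.1}), the condition $\tilde\rho=0$ reads
\begin{align*}
a_1^2-a\,a_1-b\,a_2+\rho_{11}&=0,\\
a_1a_2-c\,a_1-d\,a_2+\rho_{12}&=0,\\
a_2^2-e\,a_1-f\,a_2+\rho_{22}&=0.
\end{align*}
This is three quadratic equations in two unknowns, so the main obstacle is to show that this a priori overdetermined system admits a real common solution for every choice of $(a,b,c,d,e,f)$.

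To solve it, introduce $p=d-a_1$, $q=c-a_2$ and abbreviate $A=a-2d$, $F=f-2c$. Using Equation~(\ref{E1.1}) the three equations take the form
$$
p^2+Ap+bF+bq=0,\qquad pq=be,\qquad q^2+Fq+Ae+pe=0.
$$
When $b\ne 0$, the second equation gives $q=be/p$; substituting into the first yields the cubic $p^3+Ap^2+bFp+b^2e=0$, which always has a real root $p_0$. Substituting $q_0=be/p_0$ into the third equation and clearing denominators produces $e$ times this same cubic, so the third equation is automatic; the degenerate subcase $p_0=0$ forces $e=0$ and is handled instead by taking $p=0$, $q=-F$. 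When $b=0$, the first equation factors as $p(p+A)=0$, and the choice $p=-A$, $q=0$ satisfies the remaining two equations regardless of the other parameters. In every case a real $(a_1,a_2)=(d-p,c-q)$ exists, and the resulting linear $g$ provides the desired strong projective equivalence from $\mathcal{M}$ to a flat Type~$\mathcal{A}$ model.
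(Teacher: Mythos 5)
Your proposal is correct and follows essentially the same route as the paper: compute the shifted (constant) Christoffel symbols, reduce flatness to the vanishing of the transformed Ricci tensor, split on whether $\Gamma_{11}{}^2=0$, and in the nontrivial case eliminate one unknown so that the overdetermined system collapses to a real cubic (which always has a real root) with the remaining equation automatically satisfied. The only cosmetic differences are that you eliminate via the $\tilde\rho_{12}$ equation in the variables $p=d-a_1$, $q=c-a_2$ rather than solving $\tilde\rho_{11}=0$ linearly after normalizing $\Gamma_{11}{}^2=1$, and your sign convention for the shift is the transpose of the paper's; neither affects the argument.
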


We remark that results of Eisenhart~\cite{E64} showed that an affine surface is strongly projectively flat if and only
if both $\rho$ and $\nabla\rho$ are symmetric. Let $\mathcal{M}$ be a Type~$\mathcal{A}$ model.
Equation~(\ref{E1.1}) shows that $\rho$ is symmetric and one can make a similar direct computation
to show $\nabla\rho$ is symmetric. However, this does not yield that the 1-form in question has constant
coefficients so Lemma~\ref{L1} does not follow from general theory.

\begin{proof} Let $\mathcal{M}=(\mathbb{R}^2,\nabla)$ be a Type~$\mathcal{A}$ model.
 We work modulo linear equivalence. We use Equation~(\ref{E1.1}) to study the Ricci tensor $\rho$ of $\mathcal{M}$. 
Let $g(x^1,x^2)=w_1x^1+w_2x^2$ for $(w_1,w_2)\in\mathbb{R}^2$ and let ${}^g\mathcal{M}$ be the resulting strong projective deformation.
We then have
$$\begin{array}{lll}
{}^g\Gamma_{11}{}^1=\Gamma_{11}{}^1+2w_1,&
{}^g\Gamma_{11}{}^2=\Gamma_{11}{}^2,&
{}^g\Gamma_{12}{}^1=\Gamma_{12}{}^1+w_2,\\[0.05in]
{}^g\Gamma_{12}{}^2=\Gamma_{12}{}^2+w_1,&
{}^g\Gamma_{22}{}^1=\Gamma_{22}{}^1,&
{}^g\Gamma_{22}{}^2=\Gamma_{22}{}^2+2w_2\,.
\end{array}$$
Let ${}^g\rho$ be the Ricci tensor of ${}^g\mathcal{M}$. In dimension 2, the Ricci tensor carries the geometry; ${}^g\mathcal{M}$ is flat
if and only if ${}^g\rho=0$.

\smallbreak\noindent{\bf Case 1.} Suppose $\Gamma_{11}{}^2\ne0$. Rescale $x^2$ to ensure $\Gamma_{11}{}^2=1$. We have
$$
{}^g\rho_{11}=-\Gamma_{12}{}^1 -( \Gamma_{12}{}^2)^2 + \Gamma_{22}{}^2 + w_1^2 + \Gamma_{11}{}^1 (\Gamma_{12}{}^2 + w_1) + w_2\,.$$
We set
$
w_2:=\Gamma_{12}{}^1 - \Gamma_{11}{}^1 \Gamma_{12}{}^2 + (\Gamma_{12}{}^2)^2 - \Gamma_{22}{}^2 - \Gamma_{11}{}^1 w_1 - w_1^2
$
to ensure ${}^g\rho_{11}=0$. Then
${}^g\rho_{12}=-w_1^3+O(w_1^2)$ and ${}^g\rho_{22}=(\Gamma_{11}{}^1 - \Gamma_{12}{}^2 + w_1){}^g\rho_{12}$.
Since ${}^g\rho_{12}$ is cubic in $w_1$,  we can find $w_1$ so $\rho_{12}=0$. This forces ${}^g\rho_{22}=0$.
\smallbreak\noindent{\bf Case 2.} Suppose $\Gamma_{11}{}^2=0$. We set $w_1=\Gamma_{12}{}^2-\Gamma_{11}{}^1$ and $w_2=-\Gamma_{12}{}^1$ to see ${}^g\rho(\mathcal{M})=0$.
\end{proof}

\section{The Quasi-Einstein Equation}
Let $\mathcal{H}f:=
(\partial_{x^i}\partial_{x^j}f-\Gamma_{ij}{}^k\partial_{x^k}f)\,dx^i\otimes dx^j$
be the {\it Hessian}. Let $\rho_s$ be
the symmetric Ricci tensor and let
$\mathcal{Q}:=\ker\{\mathcal{H}+\rho_s\}$. We refer to Brozos-V\'azquez et al.~\cite{BGGX18}
for a discussion of the context in which this operator arises and for
applications to 4-dimensional geometry arising from the modified Riemannian extension.
We refer to \cite{GV18} for the proof of the following result.

\begin{theorem}\label{T2}
If $\mathcal{M}$ is an affine surface, then. If $dg$ provides a strong projective equivalence between
$\mathcal{M}$ and ${}^g\mathcal{M}$, then $\mathcal{Q}({}^g\mathcal{M})=e^g\mathcal{Q}(\mathcal{M})$. 
We have that $\dim\{\mathcal{Q}(\mathcal{M})\}\le3$; equality holds if and only if $\mathcal{M}$
is strongly projectively flat.
If $\nabla$ and $\tilde\nabla$ are two strongly projectively flat structures on
a surface $M$, then $\nabla=\tilde\nabla$ if and only if $\mathcal{Q}(M,\nabla)=\mathcal{Q}(\tilde M,\nabla)$.
Suppose ${}^g\mathcal{M}$ is flat. Then we have that $\mathcal{Q}(\mathcal{M})=e^g\operatorname{Span}\{\pone,\phi^1,\phi^2\}$
and 
$\Phi:=(\phi^1,\phi^2)$ provides local coordinates so that the unparameterized
geodesics of $\mathcal{M}$ take the form $\Phi^{-1}(at+a_0,bt+b_0)$.
\end{theorem}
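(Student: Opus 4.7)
The strategy breaks the theorem into four linked claims: (i) the equivariance $\mathcal{Q}({}^g\mathcal{M})=e^g\mathcal{Q}(\mathcal{M})$; (ii) the bound $\dim\mathcal{Q}(\mathcal{M})\le 3$, with equality iff $\mathcal{M}$ is strongly projectively flat; (iii) a strongly projectively flat connection $\nabla$ on $M$ is determined by $\mathcal{Q}(M,\nabla)$; and (iv) when ${}^g\mathcal{M}$ is flat, the stated description of $\mathcal{Q}(\mathcal{M})$ and the parametrization of its unparametrized geodesics. The plan is to establish (i) by a direct calculation, deduce (ii) via prolongation, and then derive (iii) and (iv) from (i) and (ii).

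For (i), I would write ${}^g\Gamma_{ij}{}^k$ in terms of $\Gamma_{ij}{}^k$ and the derivatives of $g$ (as displayed in the proof of Lemma~\ref{L1}) and, using the formula~(\ref{E1.1}) relating $\rho_s$ to the Christoffel symbols, compute $(\mathcal{H}^{{}^g\mathcal{M}}+{}^g\rho_s)(e^g f)$ term by term. The key cancellations are that the cross terms $(\partial_i g)(\partial_j f)$ produced by differentiating $e^g f$ twice are absorbed by the linear-in-$dg$ shift of the Christoffel symbols, while the quadratic term $f(\partial_i g)(\partial_j g)$ together with $f\,\mathcal{H}g$ is absorbed by the corresponding shift of $\rho_s$ under projective change. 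The upshot is the identity $(\mathcal{H}^{{}^g\mathcal{M}}+{}^g\rho_s)(e^g f)=e^g(\mathcal{H}^{\mathcal{M}}+\rho_s)f$, from which $\mathcal{Q}({}^g\mathcal{M})=e^g\mathcal{Q}(\mathcal{M})$ is immediate.

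For (ii) and (iii), I would prolong $(\mathcal{H}+\rho_s)f=0$ to the first-order system on the $3$-dimensional $1$-jet bundle
$$
\partial_i f=f_i,\qquad \partial_i f_j=\Gamma_{ij}{}^k f_k-(\rho_s)_{ij}f,
$$
so any local solution is determined by $(f,f_1,f_2)$ at a point and $\dim\mathcal{Q}(\mathcal{M})\le 3$. Equality amounts to Frobenius integrability of this system, and an elementary computation identifies the resulting conditions with the joint symmetry of $\rho$ and $\nabla\rho$; by Eisenhart's criterion (quoted just after Lemma~\ref{L1}) this is equivalent to strong projective flatness. For (iii), if $\mathcal{Q}(M,\nabla)=\mathcal{Q}(M,\tilde\nabla)=V$ has dimension three, the evaluation $f\mapsto(f,\partial_1 f,\partial_2 f)$ is a linear isomorphism $V\to\mathbb{R}^3$ at each point, so the two identities $\partial_i\partial_j f=\Gamma_{ij}{}^k\partial_k f-(\rho_s)_{ij}f=\tilde\Gamma_{ij}{}^k\partial_k f-(\tilde\rho_s)_{ij}f$ for every $f\in V$ force $\Gamma=\tilde\Gamma$ and $\rho_s=\tilde\rho_s$, whence $\nabla=\tilde\nabla$.

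Finally, for (iv), if ${}^g\mathcal{M}$ is flat, choose local coordinates $\phi=(\phi^1,\phi^2)$ in which ${}^g\Gamma\equiv 0$ (a flat affine surface admits affine local coordinates). Then ${}^g\rho_s=0$ and $\mathcal{Q}({}^g\mathcal{M})$ is the kernel of the ordinary Hessian in $\phi$-coordinates, namely $\operatorname{Span}\{\pone,\phi^1,\phi^2\}$. Transporting this description across the equivariance from (i) yields the asserted form of $\mathcal{Q}(\mathcal{M})$. Since projectively equivalent connections share unparametrized geodesics, and the geodesics of the flat model in $\phi$-coordinates are straight lines, the unparametrized geodesics of $\mathcal{M}$ take the form $\Phi^{-1}(at+a_0,bt+b_0)$. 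The main obstacle is the bookkeeping in the equivariance calculation of step (i); every other assertion reduces to elementary linear algebra and the classical theory of flat connections once that identity is secured.
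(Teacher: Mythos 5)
The paper does not actually prove Theorem~\ref{T2}; it defers the proof to the reference \cite{GV18}, so there is no internal argument to compare against. Your proposal is a correct reconstruction along the standard lines of that reference: the equivariance identity $(\mathcal{H}^{{}^g\mathcal{M}}+{}^g\rho_s)(e^gf)=e^g(\mathcal{H}^{\mathcal{M}}+\rho_s)f$ does hold (the shift ${}^g\Gamma_{ij}{}^k=\Gamma_{ij}{}^k+\delta_i{}^k\partial_jg+\delta_j{}^k\partial_ig$ absorbs the cross terms and ${}^g\rho_s=\rho_s-\mathcal{H}g+dg\otimes dg$ absorbs the rest, exactly as you describe), the prolongation to the $1$-jet bundle gives $\dim\mathcal{Q}\le3$ and the jet-evaluation argument for uniqueness of a strongly projectively flat $\nabla$ with given $\mathcal{Q}$ is clean and correct, and the flat case plus shared unparametrized geodesics gives the last assertion. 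Two small caveats: the one place where genuine computation is hidden is your claim that Frobenius integrability of the prolonged system is precisely the symmetry of $\rho$ and $\nabla\rho$ --- you assert this and then lean on Eisenhart's criterion, which is legitimate but is where the real work of the "equality iff strongly projectively flat" direction lives (alternatively, the "if" direction follows at once from equivariance plus the flat computation, with no integrability analysis needed); and the factor $e^g$ versus $e^{-g}$ in the final description of $\mathcal{Q}(\mathcal{M})$ is a sign convention on which the theorem statement itself is internally inconsistent, so your not fussing over it is forgivable.
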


Define distinguished Type~$\mathcal{A}$ geometries and function spaces as follows. To simplify the notation,
let $\mathcal{S}(f_1,f_2,f_3):=\operatorname{Span}_{\mathbb{R}}\{f_1,f_2,f_3\}$.

\begin{definition}\label{D1}Let $c_1\notin\{0,-1\}$ and $c_2\ne0$. 
\medbreak$\MM_0^0:=\mathcal{M}(0,0,0,0,0,0)$,\hglue 2.1cm $\mathcal{Q}_0^0=\mathcal{S}(\pone,x^1,x^2)$
\smallbreak$\MM_1^0:=\mathcal{M}(1,0,0,1,0,0)$,\hglue 2.1cm  $\mathcal{Q}_1^0=\mathcal{S}(\pone,e^{x^1},x^2e^{x^1})$,
\smallbreak$\MM_2^0:=\mathcal{M}(-1,0,0,0,0,1)$,\hglue 1.9cm $\mathcal{Q}_2^0=\mathcal{S}(\pone,e^{x^2},e^{-x^1})$,
\smallbreak$\MM_3^0:=\mathcal{M}(0,0,0,0,0,1)$,\hglue 2.2cm $\mathcal{Q}_3^0=\mathcal{S}(\pone,x^1,e^{x^2})$,
\smallbreak$\MM_4^0:=\mathcal{M}(0,0,0,0,1,0)$,\hglue 2.2cm$\mathcal{Q}_4^0=\mathcal{S}(\pone,x^2,(x^2)^2+2x^1)$,
\smallbreak$\MM_5^0:=\mathcal{M}(1,0,0,1,-1,0)$,\hglue 2cm
        $\mathcal{Q}_5^0=\mathcal{S}(\pone,e^{x^1}\cos(x^2),e^{x^1}\sin(x^2))$,
\smallbreak$\MM_1^1:=\mathcal{M}(-1,0,1,0,0,2)$,\hglue 2cm $\mathcal{Q}_1^1=e^{x^2}\mathcal{S}(\pone,x^2,e^{-x^1})$,
\smallbreak$\MM_2^1(c_1):=\mathcal{M}(-1,0,c_1,0,0,1+2c_1)$,\hglue .35cm
$\mathcal{Q}_2^1(c_1))=e^{c_1x^2}\mathcal{S}(\pone,e^{x^2},e^{-x^1})$,
\smallbreak$\MM_3^1({c_1}):=\mathcal{M}(0,0,{c_1},0,0,1+2{ c_1})$, \hglue .51cm
     $\mathcal{Q}_3^1(c_1))=e^{c_1x^2}\mathcal{S}(\pone,e^{x^2},x^1)$,
\smallbreak$\MM_4^1(c):=\mathcal{M}(0,0,1,0,c,2)$,\hglue 1.8cm
     $\mathcal{Q}_4^1(c))=e^{x^2}\mathcal{S}(\pone,x^2,c(x^2)^2+2x^1)$,
\smallbreak$\MM_5^1(c):=\mathcal{M}(1,0,0,0,1+c^2,2c)$,
$\mathcal{Q}_5^1(c))=\mathcal{S}(e^{cx^2}\cos(x^2),e^{cx^2}\sin(x^2),e^{x^1})$
\smallbreak$\MM_1^2(a_1,a_2):=\mathcal{M}\left(\frac
{a_1^2+a_2-1,a_1^2-a_1,a_1a_2,a_1a_2,a_2^2-a_2,a_1+a_2^2-1}{a_1+a_2-1}\right)$,
\par\qquad$\mathcal{Q}_1^2(a_1,a_2))=\mathcal{S}(e^{x^1},e^{x^2},e^{a_1x^1+a_2x^2})$ for
 $a_1a_2\ne0$ and $a_1+a_2\ne1$,
\smallbreak$\MM_2^2(b_1,b_2):=\mathcal{M}\left(1+b_1,0,b_2,1,\frac{1+b_2^2}{b_1-1},0\right)$,
for $b_1\ne1$ and $(b_1,b_2)\ne(0,0)$,
\par\qquad $\mathcal{Q}_2^2(b_1,b_2)=\mathcal{S}(e^{x^1}\cos(x^2),e^{x^1}\sin(x^2),e^{b_1x^1+b_2x^2})$

\smallbreak$\MM_3^2(c_2):=\mathcal{M}(2,0,0,1,c_2,1)$, \hfill
     $\mathcal{Q}_3^2(c_2)=e^{x^1}\mathcal{S}(\pone,x^1-c_2x^2,e^{x^2})$,
\smallbreak$\MM_4^2(\pm1):=\mathcal{M}(2,0,0,1,\pm1,0)$,
$\mathcal{Q}_4^2(\pm1))=\mathcal{S}(e^{x^1},x^2e^{x^1},(2x^1\pm (x^2)^2)e^{x^1})$.
\end{definition}

\begin{theorem}\label{T3}
If $\mathcal{M}$ is a Type~$\mathcal{A}$ model, then $\mathcal{M}$ is linearly equivalent to one of
the models $\mathcal{M}_i^\nu(\cdot)$ of Definition~\ref{D1}.
We have that $\mathcal{Q}(\mathcal{M}_i^\nu(\cdot))=\mathcal{Q}_i^\nu(\cdot)$ and that 
the Ricci tensor of $\mathcal{M}_i^\nu(\cdot)$ has rank $\nu$.
\end{theorem}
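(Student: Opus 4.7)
The theorem asserts three things about the finite list in Definition~\ref{D1}: (a) every Type~$\mathcal{A}$ model is linearly equivalent to one of the $\MM_i^\nu(\cdot)$, (b) the prescribed $\mathcal{Q}_i^\nu(\cdot)$ equals the true quasi-Einstein space of $\MM_i^\nu(\cdot)$, and (c) the Ricci tensor of $\MM_i^\nu(\cdot)$ has rank $\nu$. The plan is to settle (b) and (c) by direct calculation on the listed models and then to use these as invariants in proving (a).

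For (c), I would substitute the Christoffel symbols of each $\MM_i^\nu$ into Equation~(\ref{E1.1}) and read off the rank of the resulting symmetric matrix; this is mechanical but indispensable, because the rank gives the primary invariant dividing the list into the three strata $\MM_i^0$, $\MM_i^1(\cdot)$, $\MM_i^2(\cdot)$. For (b), for each listed basis element $f$ of $\mathcal{Q}_i^\nu(\cdot)$ I would verify that
$(\partial_i\partial_j f - \Gamma_{ij}{}^k\partial_k f) + f\rho_{ij}=0$
holds identically in $(x^1,x^2)$. Since Lemma~\ref{L1} gives strong projective flatness and Theorem~\ref{T2} then yields $\dim\mathcal{Q}(\MM_i^\nu(\cdot))=3$, three verified linearly independent elements must span. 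The exponential prefactors appearing in the definition of $\mathcal{Q}_i^\nu(\cdot)$ are exactly what Theorem~\ref{T2} predicts: if the linear function $g$ produced by Lemma~\ref{L1} makes ${}^g\MM_i^\nu(\cdot)$ flat, then $e^{-g}\mathcal{Q}_i^\nu(\cdot)$ must be a space of solutions to the Hessian equation of a flat Type~$\mathcal{A}$ connection, spanned by $\pone$ and two linear-in-flat-coordinates functions.

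The classification (a) is the main step. I would first dispose of the flat case $\nu=0$: the conditions $\rho_{11}=\rho_{12}=\rho_{22}=0$ from Equation~(\ref{E1.1}) form three polynomial equations in the six Christoffel symbols, and a case analysis --- branching on which Christoffel symbols can be killed by a linear change of $(x^1,x^2)$ and then normalizing the survivors by rescaling --- produces the six normal forms $\MM_0^0,\ldots,\MM_5^0$. For $\nu\ge1$, I would apply Lemma~\ref{L1} to obtain a linear $g$ with ${}^g\MM$ flat, then invoke the uniqueness clause of Theorem~\ref{T2}: a strongly projectively flat structure is determined by its $\mathcal{Q}$-space. Thus the general Type~$\mathcal{A}$ model is recovered from a flat representative together with the choice of $g$, and enumerating admissible $g$ for each flat model modulo the residual linear action preserving the flat model gives the families $\MM_i^1(\cdot)$ and $\MM_i^2(\cdot)$. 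The continuous parameters $c$, $c_1$, $c_2$, $a_i$, $b_i$ are exactly the coefficients of $g$ that cannot be normalized away, and the excluded values (such as $c_1\notin\{0,-1\}$ in $\MM_2^1(c_1)$) mark parameter collisions with lower-rank models in the list.

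The main obstacle is the bookkeeping of the $\text{GL}(2,\mathbb{R})$ action through these reductions: identifying the stabilizer of each intermediate normal form, ensuring that distinct cases yield inequivalent models, and handling the degenerate parameter values carefully so that no model appears twice and none is lost. Here the $\mathcal{Q}_i^\nu(\cdot)$ computed in step (b) serve as the decisive separating invariant, since by Theorem~\ref{T2} they determine the underlying connection up to linear equivalence; two cases in the enumeration can be shown distinct by exhibiting that no linear change of coordinates carries one $\mathcal{Q}_i^\nu(\cdot)$ onto another.
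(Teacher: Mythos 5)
Your handling of parts (b) and (c) coincides with the paper's: check by direct computation that each listed basis function lies in the kernel of $\mathcal{H}+\rho$, invoke Lemma~\ref{L1} and the bound $\dim\mathcal{Q}\le 3$ from Theorem~\ref{T2} to upgrade containment to equality, and read the rank of $\rho$ off Equation~(\ref{E1.1}). For the classification statement (a) you take a genuinely different route. The paper works entirely on the function-space side: since the translations are affine, $\mathcal{Q}(\mathcal{M})$ is a three-dimensional space invariant under the commuting derivations $\partial_{x^1}$ and $\partial_{x^2}$; examining their generalized simultaneous eigenvalues on the complexification shows every element has the form $e^{a_1x^1+a_2x^2}p(x^1,x^2)$ with $p$ polynomial, the possible invariant spaces are enumerated up to linear equivalence (with details deferred to \cite{GV18}), and the connection is then recovered from $\mathcal{Q}$ by the uniqueness clause of Theorem~\ref{T2}. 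You instead work on the connection side: classify the flat models by solving $\rho=0$ in Equation~(\ref{E1.1}) modulo $\operatorname{Gl}(2,\mathbb{R})$, then realize an arbitrary Type~$\mathcal{A}$ model as a strong projective deformation of a flat model by a linear function (Lemma~\ref{L1} run backwards) and enumerate the deformations modulo the stabilizer of each flat normal form. Both routes are sound and both defer a comparable amount of combinatorial work; the paper's eigenvalue analysis organizes the case division more systematically (it reduces to the Jordan theory of two commuting operators on a three-dimensional space), while your deformation picture must additionally contend with the non-uniqueness of the flattening function $g$ --- one model may be a deformation of several flat models --- which is precisely the bookkeeping burden you acknowledge. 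Your use of the spaces $\mathcal{Q}_i^\nu(\cdot)$ as the separating invariant is the same mechanism the paper relies on.
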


\begin{proof}
Let $\mathcal{M}$ be a Type~$\mathcal{A}$ model. By Lemma~\ref{L1}, $\mathcal{M}$ is
strongly projectively flat. Thus by Theorem~\ref{T2}, $\mathcal{M}$ is determined by $\mathcal{Q}(\mathcal{M})$. Since the Christoffel
symbols of $\mathcal{M}$ are constant, the translation group acts by affine diffeomorphisms. This implies that 
$\partial_{x^1}$ and $\partial_{x^2}$ are affine Killing vector fields. Consequently,
$\mathcal{Q}(\mathcal{M})$ is a finite dimensional $\partial_{x^1}$ and $\partial_{x^2}$ module. Let 
$\mathcal{Q}_{\mathbb{C}}(\mathcal{M}:=\mathcal{Q}(\mathcal{M})\otimes_{\mathbb{R}}\mathbb{C}$
be the complexification. This 3-dimensional space of functions invariant under the action of $\{\partial_{x^1},\partial_{x^2}\}$. By
examining the generalized simultaneious eigenvalues of  this action,  we can conclude that $\mathcal{Q}_{\mathbb{C}}(\mathcal{M})$
is generated by functions of the form $e^{a_1x^1+a_2x^2}p(x^1,x^2)$ where $p$ is polynomial and $(a_1,a_2)\in\mathbb{C}^2$.
With a bit of additional work, one can classify the possible solution spaces $\mathcal{Q}$ up to linear equivalence and
show they are linearly equivalent to $\mathcal{Q}_i^\nu(\cdot)$ for some value of the parameters;
we refer to~\cite{GV18} for further details.  By Theorem~\ref{T2},
$\dim\{\mathcal{Q}\{\mathcal{M}_i^\nu(\cdot)\}\le3$. A direct computation shows that 
$\mathcal{Q}_i^\nu(\cdot)\subset\mathcal{Q}(\mathcal{M}_i^\nu(\cdot))$ and thus equality holds for dimensional reasons.
Finally, a direct computation determines $\rho(\mathcal{M}_i^\nu(\cdot))$ and shows that the Ricci tensor has rank $\nu$.
\end{proof}

We have the following relations amongst the models $\mathcal{M}_i^\nu(\cdot)$.
\begin{theorem}\label{Thd}
The following are affine maps.
\begin{enumerate}
\item$\Phi_1^0(x^1,x^2):=(e^{x^1},x^2e^{x^1})$  embeds $\MM_1^0$ in $\MM_0^0$.
\item$\Phi_2^0(x^1,x^2):=(e^{x^2},e^{-x^1})$  embeds $\MM_2^0$ in $\MM_0^0$.
\item$\Phi_3^0(x^1,x^2):=(x^1,e^{x^2})$  embeds $\MM_3^0$ in $\MM_0^0$.
\item$\Phi_4^0(x^1,x^2):=(x^2,(x^2)^2+2x^1)$ defines $\MM_4^0\approx\MM_0^0$.
\item$\Phi_5^0(x^1,x^2)=(e^{x^1}\cos(x^2),e^{x^1}\sin(x^2))$  immerses $\MM_5^0$ in 
$\MM_0^0$.
\item$\Phi_1^1(x^1,x^2):=(e^{-x^1},x^2)$  embeds $\MM_1^1$ in $\MM_4^1(0)$.
\item$\Phi_2^1(x^1,x^2):=(e^{-x^1},x^2)$  embeds $\MM_2^1(c_1)$ in $\MM_3^1(c_1)$. 
\item$\Phi_3^1(x^1,x^2)\rightarrow(x^1e^{-x^2},-x^2)$ defines $\MM_3^1(c_1)\approx\MM_3^1(-c_1-1)$.
\item$\Phi_4^1(c)(x^1,x^2):=(x^1+\frac12c(x^2)^2,x^2)$  defines $\MM_4^1(c)\approx\MM_4^1(0)$.
\item$\Phi_5^1(x^1,x^2)=(x^1,-x^2)$ is an isomorphism $\MM_5^1(c)\approx\MM_5^1(-c)$.
\end{enumerate}\end{theorem}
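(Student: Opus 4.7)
My approach is to reduce each of the ten items to a single-line algebraic check via the $\mathcal{Q}$-space classification. Since Lemma~\ref{L1} shows that every Type~$\mathcal{A}$ model is strongly projectively flat, Theorem~\ref{T2} implies that a strongly projectively flat connection on a surface is uniquely determined by its $\mathcal{Q}$-space. Hence to verify that a local diffeomorphism $\Phi:\mathcal{M}_1\to\mathcal{M}_2$ is an affine map it suffices to show that $\Phi^*\mathcal{Q}(\mathcal{M}_2)=\mathcal{Q}(\mathcal{M}_1)$: the pullback connection $\Phi^*\nabla_2$ is again strongly projectively flat (pull back the function $g$ witnessing strong projective equivalence of $\nabla_2$ with a flat connection, and use that the pullback of a flat connection by a local diffeomorphism is flat), its $\mathcal{Q}$-space is $\Phi^*\mathcal{Q}(\mathcal{M}_2)$, and so Theorem~\ref{T2} forces $\Phi^*\nabla_2=\nabla_1$.

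With this reduction in hand, the rank-zero cases (1)--(5) are immediate from Definition~\ref{D1}: the components of each $\Phi_i^0$ are precisely the non-constant generators of $\mathcal{Q}_i^0$, so $(\Phi_i^0)^*\mathcal{S}(\pone,x^1,x^2)=\mathcal{Q}_i^0$ by inspection. For example $(\Phi_1^0)^*(\pone,x^1,x^2)=(\pone,e^{x^1},x^2e^{x^1})$ and $(\Phi_5^0)^*(\pone,x^1,x^2)=(\pone,e^{x^1}\cos(x^2),e^{x^1}\sin(x^2))$. The rank-one items (6)--(10) are handled in the same fashion after tracking the exponential prefactor: for instance in (9), $(\Phi_4^1(c))^*[e^{x^2}\mathcal{S}(\pone,x^2,2x^1)]=e^{x^2}\mathcal{S}(\pone,x^2,2x^1+c(x^2)^2)=\mathcal{Q}_4^1(c)$; in (8), $\Phi_3^1$ sends $e^{(-c_1-1)x^2}\mathcal{S}(\pone,e^{x^2},x^1)$ to $e^{(c_1+1)x^2}\mathcal{S}(\pone,e^{-x^2},x^1e^{-x^2})=e^{c_1x^2}\mathcal{S}(\pone,e^{x^2},x^1)=\mathcal{Q}_3^1(c_1)$; and in (10), $(\Phi_5^1)^*\mathcal{Q}_5^1(-c)=\mathcal{S}(e^{cx^2}\cos(x^2),-e^{cx^2}\sin(x^2),e^{x^1})=\mathcal{Q}_5^1(c)$. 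Items (6) and (7) reduce to the same one-line substitution, with $e^{-x^1}$ replacing $x^1$ in the third generator.

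The global diffeomorphism type claimed for each map is read directly from the formula: $\Phi_4^0$ and $\Phi_4^1(c)$ are polynomial diffeomorphisms of $\mathbb{R}^2$; $\Phi_1^0,\Phi_2^0,\Phi_3^0,\Phi_1^1,\Phi_2^1$ are embeddings whose images are the half-plane or quadrant cut out by positivity of the exponential components; $\Phi_5^0$ is the standard universal covering of $\mathbb{R}^2\setminus\{0\}$ and is an immersion but not an embedding; $\Phi_3^1$ and $\Phi_5^1$ are global involutive diffeomorphisms of $\mathbb{R}^2$. There is no serious obstacle in this proof: the substantive content of the theorem lies in the construction of the maps and the assembly of Definition~\ref{D1}, while the $\mathcal{Q}$-space reformulation reduces the affinity of each $\Phi_i^\nu$ to a line of elementary algebra. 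The main thing to be careful about is the direction of the pullback and the bookkeeping of the $e^{g}$ prefactors in the rank-one items; beyond that, every case is mechanical.
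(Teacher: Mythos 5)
Your proposal is correct and follows exactly the paper's own strategy: invoke Lemma~\ref{L1} and the uniqueness statement of Theorem~\ref{T2} to reduce affinity of each $\Phi_i^\nu$ to the identity $\Phi^*\mathcal{Q}(\mathcal{M}_2)=\mathcal{Q}(\mathcal{M}_1)$, then verify that identity by direct substitution. The paper's proof is a two-sentence version of the same argument; your explicit checks of the pullbacks (including the $e^g$ prefactor bookkeeping in items (6)--(10)) are all accurate and simply fill in what the paper labels ``one verifies immediately.''
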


\begin{proof}By Lemma~\ref{L1}, the Type~$\mathcal{A}$ models $\mathcal{M}_i^\nu(\cdot)$ are strongly projectively flat.
Thus, by Theorem~\ref{T2}, affine morphisms between them correspond to local diffeomorphisms which intertwine their
corresponding spaces $\mathcal{Q}$. One verifies immediately that this condition is satisfied by the maps $\Phi_i^j(\cdot)$ of
the Theorem and the desired result now holds.
\end{proof}

We can draw the following consequence.
\begin{lemma}
Let $\mathcal{M}$ be a Type~$\mathcal{A}$ flat geometry. Then $\mathcal{M}$ is geodesically complete
if and only if $\mathcal{M}$ is linearly equivalent to $\mathcal{M}_0^0$ or to $\mathcal{M}_4^0$.
\end{lemma}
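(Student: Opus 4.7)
The plan is to use the classification of Theorem~\ref{T3} to enumerate the flat Type~$\mathcal{A}$ models (those of Ricci rank $\nu=0$), and then decide completeness one model at a time using the explicit affine maps of Theorem~\ref{Thd}. The flat models are $\mathcal{M}_0^0,\mathcal{M}_1^0,\mathcal{M}_2^0,\mathcal{M}_3^0,\mathcal{M}_4^0,\mathcal{M}_5^0$. The guiding principle throughout is that an affine map intertwines the geodesic equations \emph{preserving the affine parameter}: if $\Phi:\mathcal{M}\to\mathcal{M}_0^0$ is affine and $\sigma(t)$ is a geodesic of $\mathcal{M}$, then $\Phi\circ\sigma$ is a straight line in $\mathcal{M}_0^0$ defined on the same parameter interval. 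Hence $\mathcal{M}$ is geodesically complete precisely when, for every initial condition $(p,v)$, the straight line through $\Phi(p)$ with velocity $d\Phi(v)$, extended to all $t\in\mathbb{R}$, has image in $\Phi(\mathcal{M})$ and can be lifted through $\Phi$.

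For the ``if'' direction, $\mathcal{M}_0^0$ has all Christoffel symbols zero, so geodesics are affine lines $t\mapsto p+tv$ and completeness is trivial. For $\mathcal{M}_4^0$, the map $\Phi_4^0(x^1,x^2)=(x^2,(x^2)^2+2x^1)$ of Theorem~\ref{Thd}(4) is a global diffeomorphism of $\mathbb{R}^2$ (inverse $(u,v)\mapsto((v-u^2)/2,u)$), so $\mathcal{M}_4^0$ is affinely isomorphic to $\mathcal{M}_0^0$ and is therefore also complete.

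For the ``only if'' direction, the four remaining flat models admit the affine maps $\Phi_i^0$ into $\mathcal{M}_0^0$ with proper open image: the half-plane $\{u>0\}$ for $i=1$, the open quadrant $\{u>0,v>0\}$ for $i=2$, the half-plane $\{v>0\}$ for $i=3$, and the punctured plane $\mathbb{R}^2\setminus\{0\}$ for $i=5$. For $i=1,2,3$, where $\Phi_i^0$ is a genuine embedding, exhibiting any straight line in $\mathcal{M}_0^0$ that starts in the image but exits it in finite time (which is immediate for each of these three subsets) directly produces an incomplete geodesic upstairs via the inverse of $\Phi_i^0$.

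The main obstacle is the case $i=5$, where $\Phi_5^0$ is only an immersion (in fact a universal covering of $\mathbb{R}^2\setminus\{0\}$), so incompleteness upstairs does not pass through purely from the fact that the base has been punctured. The argument I would use is that since $\Phi_5^0$ is a local diffeomorphism, any geodesic $\sigma_{(p,v)}$ of $\mathcal{M}_5^0$ pushes forward to the straight line $t\mapsto\Phi_5^0(p)+t\,d\Phi_5^0(v)$, which must avoid the origin throughout the maximal domain of~$\sigma_{(p,v)}$. Taking $p=(0,0)$ and $v=\partial_{x^1}$, the Jacobian of $\Phi_5^0$ at the origin is the identity, so the pushed-forward line is $t\mapsto(1+t,0)$; this line reaches the excluded origin at $t=-1$, forcing $\sigma_{(p,v)}$ to be undefined at $t=-1$, and hence $\mathcal{M}_5^0$ is geodesically incomplete.
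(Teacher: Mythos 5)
Your proof is correct and follows essentially the same route as the paper: both reduce to the six flat models via Theorem~\ref{T3} and decide completeness using the affine maps into $\mathcal{M}_0^0$ from Theorem~\ref{Thd}, with non-surjective image forcing incompleteness. Your only departure is that you spell out the immersion case $\mathcal{M}_5^0$ carefully (pushing a specific geodesic forward to the line $t\mapsto(1+t,0)$, which must hit the excluded origin), where the paper simply asserts that a non-surjective affine immersion implies incompleteness; this is a welcome clarification but not a different argument.
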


\begin{proof}By Theorem~\ref{T3}, $\mathcal{M}$ is linearly equivalent to $\mathcal{M}_i^0$ for some $i$.
 $\mathcal{M}_1^0$, $\mathcal{M}_2^0$, and $\mathcal{M}_3^0$ have affine embeddings into $\mathcal{M}_0^0$
 which are not surjective;
 they are therefore not geodesically complete. $\mathcal{M}_4^0$ is affine diffeomorphic to the flat affine plane $\mathcal{M}_0^0$ and thus
 is geodesically complete. $\mathcal{M}_5^0$ has an affine immersion into $\mathcal{M}_0^0$ which is not surjective;
it is not geodesically complete.
\end{proof}

We use Theorem~\ref{T3} to express $\mathcal{Q}_i^\nu(\cdot)=e^{g}\operatorname{Span}\{\pone,\phi_1,\phi_2\}$
for $g$ linear. Let $\Phi=(\phi_1,\phi_2)$. By Theorem~\ref{T2},  the unparameterized geodesics of $\mathcal{M}_i^\nu(\cdot)$
take the form $\Phi^{-1}(a_0+a_1t,b_0+b_1t)$. This reduces the problem of finding the geodesics of $\mathcal{M}_i^\nu(\cdot)$
to solving a single ODE defining the reparametrization. This fact informed our subsequent investigations; we did not simply
proceed mechanically to solve the ODEs in question.
We say a Type~$\mathcal{A}$ model $\mathcal{M}$ {\it can be geodesically completed} if there is an affine
embedding of $\mathcal{M}$ in a homogeneous geodesically complete surface; otherwise $\mathcal{M}$ is said
to be {\it essentially geodesically complete}. The following is a useful criteria.
\begin{lemma}\label{L3}
Let $\mathcal{M}$ be a Type~$\mathcal{A}$ model. Assume there exists a geodesic $\sigma(t)$
for $t\in(t_-,t_+)$ so that $\lim_{t\rightarrow\tau}|\rho(\dot\sigma(t),\partial_{x^i})|=\infty$ where
$\tau=t_+<\infty$ or $\tau=t_->-\infty$.
Then $\mathcal{M}$ is essentially geodesically incomplete.
\end{lemma}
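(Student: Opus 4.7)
The plan is to argue by contradiction. Suppose $\mathcal{M}$ is not essentially geodesically incomplete; then there exists an affine embedding $F\colon\mathcal{M}\hookrightarrow\widetilde{\mathcal{M}}$ into a locally homogeneous, geodesically complete affine surface $\widetilde{\mathcal{M}}$. Since $F$ is affine, I would push the hypothesized geodesic forward to obtain $\tilde\sigma:=F\circ\sigma$, which is a geodesic of $\widetilde{\mathcal{M}}$ initially defined on $(t_-,t_+)$. Geodesic completeness of $\widetilde{\mathcal{M}}$ then extends $\tilde\sigma$ to a smooth geodesic on all of $\mathbb{R}$; in particular the limit point $p:=\tilde\sigma(\tau)$ and the limit velocity $\dot{\tilde\sigma}(\tau)$ are well defined, and the curve together with its velocity varies smoothly across $t=\tau$.

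The next step is to transport the divergent quantity $\rho(\dot\sigma(t),\partial_{x^i})$ to $\widetilde{\mathcal{M}}$. Because $\mathcal{M}$ is Type~$\mathcal{A}$, the translations act on $\mathcal{M}$ by affine diffeomorphisms, so $\partial_{x^1}$ and $\partial_{x^2}$ are affine Killing vector fields on $\mathcal{M}$. Since $F$ is affine, the push-forwards $F_*\partial_{x^i}$ are affine Killing vector fields on the open subset $F(\mathcal{M})\subset\widetilde{\mathcal{M}}$. Invoking the naturality of the Ricci tensor under affine maps, I would then write
\[
\rho(\dot\sigma(t),\partial_{x^i}|_{\sigma(t)})
=\tilde\rho\bigl(\dot{\tilde\sigma}(t),\,F_*\partial_{x^i}|_{\tilde\sigma(t)}\bigr)
\qquad\text{for every }t\in(t_-,t_+).
\]
If the affine Killing fields $F_*\partial_{x^i}$ admit a continuous extension to a neighborhood of $p$, the right-hand side is continuous (indeed real analytic) at $t=\tau$ and hence bounded there, contradicting the hypothesis that the left-hand side diverges in absolute value.

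The crux, and the main obstacle, is therefore extending the locally defined affine Killing fields $F_*\partial_{x^i}$ across the boundary point $p$, since a priori they live only on $F(\mathcal{M})$. This is where local homogeneity of $\widetilde{\mathcal{M}}$ is essential: I would appeal to the affine analogue of Nomizu's extension theorem, which asserts that on a connected, locally homogeneous affine manifold every locally defined Killing field extends uniquely along any path, and hence lifts to a global affine Killing field on the universal cover $\widehat{\mathcal{M}}$ of $\widetilde{\mathcal{M}}$. Lifting $\tilde\sigma$ to $\widehat{\mathcal{M}}$ and extending $F_*\partial_{x^i}$ in this way produces globally defined affine Killing fields $\widehat X_i$ on $\widehat{\mathcal{M}}$ that agree with $F_*\partial_{x^i}$ on a lift of $F(\mathcal{M})$. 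Continuity of $\widehat X_i$ at the lifted limit of $\tilde\sigma(\tau)$ then yields a bounded limit for $\tilde\rho(\dot{\tilde\sigma}(t),\widehat X_i)$ as $t\to\tau$, producing the contradiction and completing the proof.
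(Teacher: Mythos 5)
Your proposal is correct and follows essentially the same route as the paper: argue by contradiction, pass to the (simply connected cover of the) completion, extend $\partial_{x^i}$ to a globally defined affine Killing field there, and conclude that $\rho(\dot\sigma,\partial_{x^i})$ must remain bounded as $t\rightarrow\tau$, contradicting the hypothesis. The only cosmetic difference is that the paper derives the contradiction from real analyticity of the extended function $f_1(t)=\rho(\dot\sigma_1(t),X_i(t))$ (citing \cite{BGG18}), whereas you use continuity of the extended Killing fields at the limit point, which suffices equally well.
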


\begin{proof} Suppose to the contrary that there exists an affine surface $\MM_1$ which is locally modeled on $\mathcal{M}$.
Copy a small piece of the given geodesic $\sigma$ into $\MM_1$ to define a geodesic $\sigma_1$ in $\MM_1$. We
may assume without loss of generality that $\MM_1$ is simply connected and extend the vector field $\partial_{x^i}$ to a globally 
defined affine Killing vector field $X_i$ on $\MM_1$. Results of \cite{BGG18} show
that $\MM_1$ is real analytic. Thus the function $f(t):=\rho_{\MM}(\dot\sigma,\partial_{x^i})(t)$ defined for $t\in(t_-,t_+)$
extends to a real
analytic function $f_1(t):=\rho_{\MM_1}(\dot\sigma_1(t),X_i(t))$ for $t\in\mathbb{R}$. This is not possible since by assumption
$f(t)$ blows up at a finite value.
\end{proof}

If the Ricci tensor of a Type~$\mathcal{A}$ model $\mathcal{M}$ has rank 1, then
$\mathcal{M}$ is linearly equivalent to $\mathcal{M}_i^1(\cdot)$ for some value of the parameters. Thus it suffices to
study these examples.

\begin{lemma}\label{L4}
$\mathcal{M}_1^1$,  $\mathcal{M}_2^1(c_1)$ for $c_1\ne-\frac12$,
$\mathcal{M}_3^1(c_1)$ for $c_1\ne-\frac12$, $\mathcal{M}_4^1(c)$ for any $c$, and  
$\mathcal{M}_5^1(c)$ for $c\ne0$ are essentially geodesically incomplete.
$\mathcal{M}_3^1(-\frac12)$ is geodesically complete.
$\mathcal{M}_2^1(-\frac12)$ and $\mathcal{M}_5^1(0)$ can
be geodesically completed.\end{lemma}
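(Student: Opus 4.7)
The plan is to use Theorem~\ref{T3} to write each $\mathcal{Q}_i^1(\cdot)=e^{g}\mathcal{S}(\pone,\phi_1,\phi_2)$ with $g$ linear, so that by Theorem~\ref{T2} the unparameterized geodesics of $\mathcal{M}_i^1(\cdot)$ are the curves $\Phi^{-1}(a_0+a_1t,b_0+b_1t)$ with $\Phi=(\phi_1,\phi_2)$. Passing from this linear parametrization of $\Phi\circ\sigma$ to an affine parametrization of $\sigma$ amounts to a single scalar ODE in $t$, and its integrability on all of $\mathbb{R}$ governs completeness. I would treat the five families $\mathcal{M}_1^1$, $\mathcal{M}_2^1(c_1)$, $\mathcal{M}_3^1(c_1)$, $\mathcal{M}_4^1(c)$, and $\mathcal{M}_5^1(c)$ in turn, using the affine equivalences of Theorem~\ref{Thd} (parts~6, 8, 9, 10) to cut down on the cases.

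For the models asserted to be essentially incomplete, Lemma~\ref{L3} is the key tool. The reparametrization ODE is typically of Riccati form; for example, $\mathcal{M}_1^1$ yields $\ddot{x}^2=-2(\dot{x}^2)^2$, so $\dot{x}^2=1/(2t+C)$ blows up in finite time. A direct computation from Equation~(\ref{E1.1}) gives $\rho=(dx^2)^2$ in this case, so $\rho(\dot\sigma,\partial_{x^2})=\dot{x}^2$ diverges and Lemma~\ref{L3} forces essential incompleteness. The other incomplete families are handled analogously; the distinguished parameter values $c_1=-\tfrac12$ in $\mathcal{M}_2^1$ and $\mathcal{M}_3^1$, and $c=0$ in $\mathcal{M}_5^1$, are precisely those for which the coefficient of the quadratic nonlinearity in the offending Riccati equation vanishes, which opens the door to completeness.

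For $\mathcal{M}_3^1(-\tfrac12)=\mathcal{M}(0,0,-\tfrac12,0,0,0)$ the geodesic equations reduce to $\ddot{x}^2=0$ and $\ddot{x}^1=\dot{x}^1\dot{x}^2$, both solvable in closed form (exponential in $t$), so all geodesics are defined on $\mathbb{R}$. Theorem~\ref{Thd}(7) then gives an affine embedding of $\mathcal{M}_2^1(-\tfrac12)$ into the now-complete $\mathcal{M}_3^1(-\tfrac12)$, settling that case. For $\mathcal{M}_5^1(0)=\mathcal{M}(1,0,0,0,1,0)$, the substitution $y=e^{x^1}$ converts $\nabla$ into a connection $\tilde\nabla$ on $\mathbb{R}^2$ whose only nontrivial Christoffel symbol is $\tilde\Gamma_{22}{}^{1}=y$; its geodesics $y(t)=A\cos(\alpha t)+B\sin(\alpha t)$, $x^2(t)=\alpha t+\beta$ are defined for all $t\in\mathbb{R}$. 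A direct computation of the affine Killing equation for $\tilde\nabla$ yields the $4$-dimensional algebra spanned by $\cos(x^2)\partial_y$, $\sin(x^2)\partial_y$, $y\partial_y+x^2\partial_{x^2}$, and $\partial_{x^2}$, which acts transitively on $\mathbb{R}^2$. Hence $(\mathbb{R}^2,\tilde\nabla)$ is homogeneous and complete, and $\mathcal{M}_5^1(0)$ embeds in it as the open half-plane $\{y>0\}$.

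The principal obstacle is $\mathcal{M}_5^1(0)$: its completion is not produced by any of the maps in Theorem~\ref{Thd}, so the change of coordinates $y=e^{x^1}$ and the verification of homogeneity of the resulting $\tilde\nabla$ must be supplied by hand. The remaining work --- checking, for each incomplete family, that at least one geodesic forces $|\rho(\dot\sigma,\partial_{x^i})|$ to blow up in finite time via Equation~(\ref{E1.1}) --- is essentially bookkeeping, but must be carried out family by family since the reparametrization ODE and the form of $\rho$ change with the model.
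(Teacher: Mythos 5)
Your proposal is correct and follows essentially the same route as the paper: explicit geodesics whose $\dot x^2$ blows up in finite time combined with Lemma~\ref{L3} for the essentially incomplete families, direct integration of the (decoupled) geodesic equations for $\mathcal{M}_3^1(-\frac12)$, the embedding of Theorem~\ref{Thd}(7) for $\mathcal{M}_2^1(-\frac12)$, and the same completion $\mathcal{N}$ (only nonzero Christoffel symbol $\Gamma_{22}{}^1$ equal to the first coordinate) reached via $y=e^{x^1}$, with homogeneity checked by exhibiting a transitive affine symmetry group. One small slip: $y\partial_y+x^2\partial_{x^2}$ is not an affine Killing field of $\mathcal{N}$ (the correct field is $y\partial_y$), but this is harmless since $\cos(x^2)\partial_y$, $\sin(x^2)\partial_y$, and $\partial_{x^2}$ already act transitively.
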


\begin{proof}A direct computation shows
$$\begin{array}{ll}
\rho_{\mathcal{M}_1^4}=dx^2\otimes dx^2,&\rho_{\mathcal{M}_2^4(c_1)}=(c_1+c_1^2)dx^2\otimes dx^2,\\
\rho_{\mathcal{M}_3^4(c_1)}=(c_1+c_1^2)dx^2\otimes dx^2,\qquad&\rho_{\mathcal{M}_4^4(c)}=dx^2\otimes dx^2,\\
\rho_{\mathcal{M}_5^4(c)}=(1+c^2)dx^2\otimes dx^2.
\end{array}$$
We apply the criteria of Lemma~\ref{L3} with $\partial_{x^i}=\partial_{x^2}$ to study these geometries.
\smallbreak\noindent{\bf Case 1.} Let $\mathcal{M}=\mathcal{M}_1^1$. A direct computation shows
$\sigma(t)=(0,\frac12\log(t))$ is a geodesic for $t\in(0,\infty)$. Since $\lim_{t\rightarrow0}|\rho(\dot\sigma,\partial_{x^2})|=\infty$,
$\mathcal{M}_1^1$ is essentially geodesically incomplete.
\smallbreak\noindent{\bf Case 2.} Let $\mathcal{M}=\mathcal{M}_2^1(c_1)$ or
$\mathcal{M}=\mathcal{M}_3^1(c_1)$ for $c_1\ne-\frac12$. A direct computation shows
$\sigma(t):=(0,\frac1{1+2c}\log(t))$ is a geodesic for $t\in(0,\infty)$. Since we have that
$\lim_{t\rightarrow0}|\rho(\dot\sigma,\partial_{x^2})|=\infty$, 
$\mathcal{M}$ is essentially geodesically incomplete.
\smallbreak\noindent{\bf Case 3.} Let $\mathcal{M}=\mathcal{M}_3^1(-\frac12)$. Suppose $b\ne0$.
Let $\sigma_{a,b}(t)=(\frac ab(e^{bt}-1),bt)$. Then $\sigma$ is a geodesic with $\sigma(0)=(0,0)$ and $\dot\sigma(0)=(a,b)$.
If $b=0$, let $\sigma_{a,b}(t)=(at,0)$. Then $\sigma$ is a geodesic with $\sigma(0)=(0,0)$ and $\dot\sigma(0)=(a,0)$. Thus
every geodesic starting at $(0,0)$ extends for infinite time. Since $\mathcal{M}$ is homogeneous, $\mathcal{M}$ is geodesically
complete.
\smallbreak\noindent{\bf Case 4.} Let $\mathcal{M}=\mathcal{M}_2^1(-\frac12)$. A direct computation shows
$\sigma(t):=(-\log(t),0)$ is a geodesic for $t\in(0,\infty)$. This geodesic can not be continued to $t=0$ and thus
$\mathcal{M}_2^1(-\frac12)$ is geodesically incomplete.
By Theorem~\ref{Thd}, $\mathcal{M}_2^1(-\frac12)$ has an affine embedding in $\mathcal{M}_3^1(-\frac12)$. 
Thus by Case 3, $\mathcal{M}_2^1(-\frac12)$ can be
geodesically completed.
\smallbreak\noindent{\bf Case 5.} Let $\mathcal{M}=\mathcal{M}_4^4(c)$. Let
$\sigma(t):=(-\frac c8\log(t)^2,\frac12\log(t))$. A direct computation shows this is a geodesic for $t\in(0,\infty)$. Since 
$\lim_{t\rightarrow0}|\rho(\dot\sigma,\partial_{x^2})|=\infty$, 
$\mathcal{M}$ is essentially geodesically incomplete.
\smallbreak\noindent{\bf Case 6.} Let $\mathcal{M}=\mathcal{M}_5^1(c)$. Suppose that $c\ne0$.
A direct computation shows that
$\sigma(t)=(\log(\cos(\frac{\log (t)}{2 c}))+\frac{\log (t)}{2},\frac{\log (t)}{2 c})$
is a geodesic for $t\in(0,\infty)$. Since
$\lim_{t\rightarrow0}|\rho(\dot\sigma,\partial_{x^2})|=\infty$, 
$\mathcal{M}$ is essentially geodesically incomplete.  
\smallbreak\noindent{\bf Case 7.} If $c=0$, the curve $\sigma(t)=(\log(\cos(t)),t)$ is a geodesic for $\mathcal{M}_5^1(c)$
which does not extend to $\mathbb{R}$. Thus $\mathcal{M}_5^1(0)$ is geodesically incomplete.
We complete the proof by showing $\mathcal{M}_5^1(0)$ can be geodesically completed. Let 
$\mathcal{N}=(\mathbb{R}^2,\nabla)$ be the affine surface where the only non-zero Christoffel symbol of $\nabla$ is $\Gamma_{22}{}^1=x^1$.
We compute $\{\cos(x^2),\sin(x^2),x^1)\}\subset\mathcal{Q}(\mathcal{N})$ and thus by Theorem~\ref{T2} for dimensional
reasons we have $\mathcal{Q}$ is spanned by these elements and $\mathcal{N}$ is strongly projectively flat.
Let
$$
\Psi_{a,b,c,d}(x^1,x^2):=(e^{a}x^1+b\cos(x^2)+c\sin(x^2),x^2+d)\,.
$$
Then $T(a,b,c,d)^*\mathcal{Q}(\mathcal{N})=\mathcal{Q}(\mathcal{N})$ so
$T(a,b,c,d)$ is an affine diffeomorphism of $\mathcal{N}$. Since these diffeomorphisms act transitively on $\mathcal{N}$,
$\mathcal{N}$ is homogeneous. If $b\ne0$, let $\sigma_{a,b}(t):=(\frac ab\sin(bt),bt)$; this is a geodesic with 
$$\sigma_{a,b}(0)=(0,0)\text{ and }\dot\sigma_{a,b}(0)=(a,b)\,.
$$
If $b=0$, let $\sigma_{a,0}(t):=(at,0)$; this is a geodesic with $\sigma_{a,0}(0)=(0,0)$ and with $\dot\sigma_{a,0}(0)=(a,0)$. Thus $\mathcal{N}$
is geodesically complete at $(0,0)$ and, since $\mathcal{N}$ is homogeneous, $\mathcal{N}$ is geodesically complete.
The map $\Phi(x^1,x^2)=(e^{x^1},x^2)$ embeds $\mathbb{R}^2$ in $\mathbb{R}^2$ and satisfies
$\Phi^*\mathcal{Q}(\mathcal{N})=\mathcal{Q}(\mathcal{M}_5^1(0)$. Thus $\Phi$ is an affine embedding of $\mathcal{M}_5^1(0))$
in $\mathcal{N}$ so $\mathcal{N}$ provides the desired geodesic completion of $\mathcal{M}_5^1(0)$.
\end{proof}

We begin our discussion of the geometries where the Ricci tensor has rank 2 with the following result.
\begin{lemma}\label{L5} $\mathcal{M}_1^2(a_1,a_2)$, $\mathcal{M}_2^2(b_1,b_2)$ for $b_1\ne-1$, $\mathcal{M}_3^2(c_2)$,
and $\mathcal{M}_4^2(\pm1)$ are essentially geodesically incomplete.
\end{lemma}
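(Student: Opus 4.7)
The plan is to apply the criterion of Lemma~\ref{L3} to each of the four families in turn. For every model $\mathcal{M}_i^2(\cdot)$ I would first compute the symmetric Ricci tensor $\rho$ from the Christoffel symbols given in Definition~\ref{D1} via Equation~(\ref{E1.1}). Since $\rho$ has rank $2$ in these cases, the contractions $\rho(\partial_{x^i},\partial_{x^j})$ are non-degenerate enough that I can select whichever coordinate vector $\partial_{x^i}$ gives the cleanest expression for $\rho(\dot\sigma,\partial_{x^i})$.

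To produce explicit geodesics I would exploit Theorem~\ref{T2} together with the explicit quasi-Einstein spaces already tabulated in Definition~\ref{D1}. For each geometry I would factor out an appropriate linear $g$ so that
$$\mathcal{Q}_i^2(\cdot)=e^{g}\operatorname{Span}\{\pone,\phi_1,\phi_2\},$$
set $\Phi=(\phi_1,\phi_2)$, and write the unparameterized geodesics as $\Phi^{-1}(a_0+a_1t,b_0+b_1t)$. For $\mathcal{M}_1^2(a_1,a_2)$ I can use $g=x^1$ with $(\phi_1,\phi_2)=(e^{x^2-x^1},e^{(a_1-1)x^1+a_2x^2})$; for $\mathcal{M}_2^2(b_1,b_2)$ with $b_1\neq -1$ use $g=x^1$ and the trigonometric pair $\cos(x^2),\sin(x^2)$ modified by the extra exponential factor; for $\mathcal{M}_3^2(c_2)$ use $g=x^1$ and $(\phi_1,\phi_2)=(x^1-c_2x^2,e^{x^2})$; and for $\mathcal{M}_4^2(\pm1)$ factor out $e^{x^1}$ to leave the polynomial span $\operatorname{Span}\{\pone,x^2,2x^1\pm(x^2)^2\}$. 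In each case a well chosen line in the $\Phi$-image parameterizes a geodesic that, after solving the one-dimensional reparameterization ODE promised in the commentary after Theorem~\ref{T3}, is only defined on a half-line whose finite endpoint is reached in finite parameter time. Evaluating $\rho(\dot\sigma,\partial_{x^i})$ along the resulting curve and observing the blow-up at that endpoint then invokes Lemma~\ref{L3} directly.

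The main obstacle I expect is the parameter dependence. Three of the four families carry free parameters and the exclusion $b_1\neq -1$ in $\mathcal{M}_2^2(b_1,b_2)$ suggests that, exactly as $c_1=-\tfrac12$ did in Lemma~\ref{L4}, the reparameterization ODE degenerates on a codimension-one locus. I would therefore aim for a uniform ansatz (for example, a geodesic with one vanishing velocity component, or one lying on a coordinate axis after a translation) whose blow-up rate is controlled by a factor that stays non-zero on the admissible parameter region, reserving a small case-split for boundary parameter values when the generic choice degenerates. The model $\mathcal{M}_4^2(\pm1)$ is the most delicate because $\Phi$ contains a quadratic polynomial, so inverting it introduces a square root and the reparametrization ODE is not of elementary exponential type; here I would restrict to a geodesic with $x^2\equiv 0$ or $\dot x^2\equiv 0$ to reduce to a scalar Bernoulli-type equation whose solution develops a singularity in finite time, and then verify that the corresponding Ricci contraction diverges there.
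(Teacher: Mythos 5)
Your overall strategy (invoke Lemma~\ref{L3}, exploit the rank-$2$ Ricci tensor, and extract geodesics from the spaces $\mathcal{Q}_i^2(\cdot)$) is the one the authors say informed their work, but as written it stops short of a proof, and the paper's argument is considerably more economical. The paper's key simplifications are two. First, since $\rho$ is nondegenerate for these models, the hypothesis of Lemma~\ref{L3} reduces to exhibiting a geodesic whose \emph{velocity} blows up at a finite parameter value; you gesture at this (``non-degenerate enough'') but it should be stated outright, as it is what removes the need to evaluate $\rho(\dot\sigma,\partial_{x^i})$ at all. Second, because the Christoffel symbols are constant, the ray $\sigma(t)=\log(t)\,v$ is an affine geodesic precisely when the constant vector $v$ satisfies the algebraic fixed-point equation $v^i=\Gamma_{jk}{}^iv^jv^k$, and then $\dot\sigma=v/t$ blows up as $t\downarrow0$. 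The paper simply writes down such a $v$ in each case: $v=\frac{(1-a_2,a_1)}{1+a_1-a_2}$ or $v=\frac{(a_2,1-a_1)}{1+a_2-a_1}$ for $\mathcal{M}_1^2(a_1,a_2)$ (at least one denominator is nonzero since the two denominators sum to $2$), $v=(\frac1{1+b_1},0)$ for $\mathcal{M}_2^2(b_1,b_2)$ with $b_1\ne-1$, and $v=(\frac12,0)$ for $\mathcal{M}_3^2(c_2)$ and $\mathcal{M}_4^2(\pm1)$. No inversion of $\Phi$ and no reparametrization ODE is required.

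Against that, your proposal has three concrete gaps. (i) All the analytic content is deferred: you never exhibit a geodesic, never solve or estimate the reparametrization ODE, and never verify a blow-up. Theorem~\ref{T2} only describes the \emph{unparametrized} geodesics as $\Phi^{-1}$ of lines, so finiteness of the affine parameter at the endpoint of the half-line is exactly what must be proved, not assumed. (ii) Your factorization for $\mathcal{M}_2^2(b_1,b_2)$ is incorrect: with $g=x^1$ one gets $e^{-g}\mathcal{Q}_2^2=\operatorname{Span}\{\cos (x^2),\sin (x^2), e^{(b_1-1)x^1+b_2x^2}\}$, which does not contain the constant function required by Theorem~\ref{T2}; one must take $g=b_1x^1+b_2x^2$. (iii) Your fallback ansatz of ``a geodesic lying on a coordinate axis after a translation'' fails for $\mathcal{M}_1^2(a_1,a_2)$: translations do not alter constant Christoffel symbols, and the $x^1$-axis is totally geodesic only when $\Gamma_{11}{}^2=\frac{a_1^2-a_1}{a_1+a_2-1}=0$, i.e.\ $a_1\in\{0,1\}$, which is excluded or non-generic; here you genuinely need a general ray direction solving $v=\Gamma(v,v)$, and producing one (with a nonvanishing denominator for all admissible parameters) is the whole point of that case. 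Your instinct for $\mathcal{M}_2^2$, $\mathcal{M}_3^2$ and $\mathcal{M}_4^2(\pm1)$ --- set $x^2\equiv0$ and reduce to $\ddot x^1+\Gamma_{11}{}^1(\dot x^1)^2=0$ --- is exactly right and reproduces the paper's geodesics, so with (i)--(iii) repaired the plan would go through.
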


\begin{proof}
A direct computation shows the Ricci tensor for the Type~$\mathcal{A}$ models $\mathcal{M}_i^2(\cdot)$ has rank 2.
Consequently the criteria of Lemma~\ref{L3} for essential geodesic incompleteness
is simply the existence of a geodesic so that $\lim_{t\rightarrow\tau}|\dot x(t)|=\infty$ or $\lim_{t\rightarrow\tau}|\dot y(t)|=\infty$ for
some finite value $\tau$.
\smallbreak\noindent{\bf Case 1.} Let $\mathcal{M}=\mathcal{M}_1^2(a_1,a_2)$. Let
$$\begin{array}{ll}
\sigma_1(t):=\log(t)\frac{(1-a_2,a_1)}{1+a_1-a_2}&\text{if }1+a_1-a_2\ne0,\\
\sigma_2(t):=\log(t)\frac{(a_2,1-a_1)}{1+a_2-a_1}&\text{if }1+a_2-a_1\ne0.
\end{array}$$
Since $(1+a_2-a_1)+(1-a_2+a_1)=2$, at least one of these curves is well defined. A direct computation
shows such a curve is a geodesic and hence $\mathcal{M}$ is essentially geodesically incomplete.
\smallbreak\noindent{\bf Case 2.} Let $\mathcal{M}=\mathcal{M}_2^2(b_1,b_2)$ for $b_1\ne-1$.
The curve $\sigma(t)=\frac1{1+b_1}(\log(t),0)$ is a geodesic. Consequently, $\mathcal{M}$ is
essentially geodesically complete.
\smallbreak\noindent{\bf Case 3.} Let $\mathcal{M}=\mathcal{M}_3^2(c_2)$ or $\mathcal{M}=\mathcal{M}_4^2(\pm)$. The curve
$\sigma(t)=\frac12(\log(t),0)$ is a geodesic; consequently, $\mathcal{M}$ is
essentially geodesically complete.
\end{proof}

Before considering the geometry $\mathcal{M}_2^2(-1,b_2)$, we must establish a preliminary result.
\goodbreak\begin{lemma}\label{L6} Let $P$ be a point of an affine manifold $\mathcal{M}$. 
%((
Let $\sigma:[0,T)\rightarrow M$ be an affine geodesic. Suppose
$\lim_{t\rightarrow T}\sigma(t)=P$ exists. Then there exists $\epsilon>0$ so that $\sigma$ can be extended to the parameter range $[0,T+\epsilon)$
as an affine geodesic.
%]]
\end{lemma}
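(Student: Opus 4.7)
The plan is to identify $\sigma$ near $t=T$ with a short exponential geodesic whose endpoint data depends smoothly on parameters, and then to extend past $T$ by pushing those parameters slightly beyond their limiting value. The main ingredient is the existence of a convex normal neighborhood $U$ of $P$ in the sense of Whitehead (see Kobayashi--Nomizu~\cite{KN63}): any two points $q,r\in U$ are joined by a unique $\nabla$-geodesic that remains in $U$, and the map
$$F(q,r,s):=\exp_q\!\bigl(s\exp_q^{-1}(r)\bigr)$$
is smooth on an open subset of $M\times M\times\mathbb{R}$ containing $U\times U\times[0,1]$. For each fixed $(q,r)$ the curve $s\mapsto F(q,r,s)$ is a $\nabla$-geodesic, and affine reparametrization of $s$ preserves the geodesic equation.

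First I would choose $t_1\in[0,T)$ with $\sigma([t_1,T))\subset U$; this is possible because $\sigma(t)\to P\in U$. Next, for any $t_2\in(t_1,T)$ the segment $\sigma|_{[t_1,t_2]}$ is a geodesic in $U$ joining $\sigma(t_1)$ and $\sigma(t_2)$, so by the uniqueness clause of Whitehead's theorem it must be the canonical short geodesic, affinely reparametrized:
$$\sigma(\tau)=F\!\Bigl(\sigma(t_1),\sigma(t_2),\tfrac{\tau-t_1}{t_2-t_1}\Bigr),\qquad \tau\in[t_1,t_2].$$
Letting $t_2\to T$ and using continuity of $F$ together with $\sigma(t_2)\to P$ yields
$$\sigma(\tau)=F\!\Bigl(\sigma(t_1),P,\tfrac{\tau-t_1}{T-t_1}\Bigr),\qquad \tau\in[t_1,T].$$
Finally, because $s\mapsto F(\sigma(t_1),P,s)$ is smoothly defined on an open interval containing $[0,1]$, the right-hand side of the last displayed formula makes sense for all $\tau$ in some interval $[0,T+\epsilon)$. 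This extension is an affine geodesic, and agrees with $\sigma$ on $[t_1,T)$ by construction, which supplies the required continuation.

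The hard part will be the identification in the second step: one must know that $\sigma|_{[t_1,t_2]}$ actually coincides with the canonical short geodesic produced by Whitehead's theorem and not merely shares its endpoints and time interval. This is where convexity of $U$ matters, since uniqueness only applies to geodesics that remain inside $U$; the argument succeeds because $\sigma(t)\to P$ lets us shrink $[t_1,T)$ so that the entire subsegment stays in $U$. Every other step is a routine application of the smoothness and openness of the domain of the exponential map of $\nabla$.
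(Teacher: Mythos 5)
Your argument is correct, but it runs along a genuinely different line from the paper's. The paper works with a reference flat metric pulled back through $\exp_P$ and a compactness argument: it extracts constants $\tau,\delta$ so that every non-trivial geodesic starting in $B_\delta(P)$ persists at least until it exits the larger ball $B_{\tau/2}(P)$; since $\sigma$ is eventually trapped in $B_\delta(P)$ and never exits, it cannot terminate at $T$. That route never identifies the limiting geodesic explicitly. You instead invoke Whitehead's convex normal neighborhoods and the smoothness of the two-point map $F(q,r,s)=\exp_q(s\exp_q^{-1}(r))$ (valid for an arbitrary torsion-free connection, cf.~\cite{KN63}), recognize $\sigma|_{[t_1,t_2]}$ as the affinely reparametrized canonical segment by uniqueness, and pass to the limit $t_2\to T$ to write $\sigma$ near $T$ as a restriction of a geodesic already defined on an open interval past $s=1$. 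This buys an explicit formula for the extension and a clean treatment of the endpoint behaviour (the limit of $\dot\sigma$ comes for free), at the cost of importing the stronger convex-neighborhood theorem; the paper's escape-time argument needs only that $\exp_P$ is a local diffeomorphism plus continuous dependence on initial conditions. Two small points you should make explicit: dispose of the constant geodesic separately (uniqueness in $U$ would otherwise be applied to a segment with coincident endpoints), and note that the uniqueness clause is up to affine reparametrization, which is exactly what pins down the factor $(\tau-t_1)/(t_2-t_1)$. Neither affects the validity of the proof.
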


\begin{proof} 
Put a positive definite inner product $\langle\cdot,\cdot\rangle$ on $T_PM$ to act as a reference metric. 
Let $B_r$ be the ball of radius $r$ about the origin in $T_PM$. Since the exponential
map is a local diffeomorphism, we can use $\exp_P$ to identify $B_\varepsilon$ with a neighborhood of $P$ in $M$ for some small
$\varepsilon$. We use this identification to define a flat Riemannian metric near $P$ on $M$ so that $\exp_P$ is an isometry from $B_\varepsilon$ to 
$M$. Let $d(\cdot,\cdot)$ be the associated distance function on $M$. Let $B_r(P):=\exp_P(B_r)=\{Q:d(P,Q)\le r\}$ for $r\le\varepsilon$. 
Choose linear coordinates on $T_PM$
to put coordinates on $B_\varepsilon(P)$. This identifies $T_QM$ with $T_PM$ and extends $\langle\cdot,\cdot\rangle$ to $T_QM$
for $Q\in B_\varepsilon(P)$.
Compactness shows that there exists $0<\tau<\frac12\varepsilon$ so that if $Q\in B_{\frac\varepsilon2}(P)$ and
if $\xi\in T_QM$ satisfies $\|\xi\|=1$, then the geodesic $\sigma_{Q,\xi}(t):=\exp_Q(t\xi)$ exists for $t\in[0,\tau]$ and
belongs to $B_\varepsilon(P)$. By continuity, we can
choose $0<\delta<\frac14\tau$ so that if $Q\in B_\delta(P)$ and $\|\xi\|=1$, 
then $d(\sigma_{Q,\xi}(\tau),\sigma_{P,\xi}(\tau))<\frac\tau2$. Since
$d(P,\sigma_{P,\xi}(\tau))=\tau$, this implies $d(P,\sigma_{Q,\xi}(\tau))\ge\frac12\tau$. 
We conclude from these estimates that any non-trivial geodesic
which begins in $B_\delta(P)$ continues to exist at least until it exits from $B_{\frac12\tau}(P)$
and that it does in fact exit from $B_{\frac12\tau}(P)$.
We assumed $\lim_{t\rightarrow T}\sigma(t)=P$. Choose $T_0<T$ so $\sigma(T_0,T)\subset B_\delta(P)$. Then $\sigma$ continues to exist
until $\sigma$ exits from $B_{\frac12\tau}(P)$. Furthermore, $\sigma(T)=P$ and $\sigma$ extends to a geodesic 
defined on $(T_0,T+\epsilon)$ for some $\epsilon$.
\end{proof}

We complete our discussion with the following result.
\begin{lemma}\label{L7}
$\mathcal{M}_2^2(-1,b_2)$ is geodesically complete.
\end{lemma}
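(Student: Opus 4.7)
The plan is to exploit the strong projective flatness established in Lemma~\ref{L1} together with the unparametrized geodesic description of Theorem~\ref{T2}. Applying Case~2 of the proof of Lemma~\ref{L1} to the Christoffel symbols $(0,0,b_2,1,-\tfrac{1+b_2^2}{2},0)$ of $\mathcal{M}=\mathcal{M}_2^2(-1,b_2)$ yields the flattening linear function $g(x^1,x^2)=x^1-b_2 x^2$; matching with $\mathcal{Q}_2^2(-1,b_2)$ in Definition~\ref{D1} then identifies the associated flat coordinates as $\phi^1=e^{2x^1-b_2 x^2}\cos(x^2)$ and $\phi^2=e^{2x^1-b_2 x^2}\sin(x^2)$. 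I introduce polar-type variables $r=e^{2x^1-b_2 x^2}>0$ and $\theta=x^2\in\mathbb{R}$, so that $\Phi=(\phi^1,\phi^2)$ is the universal covering $(x^1,x^2)\mapsto(r\cos\theta,r\sin\theta)$ of $\mathbb{R}^2\setminus\{0\}$.

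Next I will derive the reparametrization that turns a straight line in the flat $\Phi$-chart into an affine $\nabla$-geodesic of $\mathcal{M}$. Comparing $\nabla$ with the flat connection ${}^g\nabla$ yields $\dot\phi^k=A^k e^{2g(\sigma(t))}$ for constants $A^k$; writing the line as $\phi^k(s)=B^k+A^k s$ with $(A^1,A^2)\neq(0,0)$, the affine parameter then satisfies
\[
\frac{dt}{ds}=e^{-2g}=r(s)^{-1}e^{b_2\theta(s)}.
\]
Geodesic completeness thus reduces to verifying that $t(s)=\int r(s')^{-1}e^{b_2\theta(s')}\,ds'$ covers all of $\mathbb{R}$ as $s$ runs over the range for which the line lies in $\mathbb{R}^2\setminus\{0\}$.

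The analysis then splits into two cases, and this is where I expect the main work. When the line passes through the origin of the $\Phi$-plane, the lift forces $x^2$ to be constant along $\sigma$ and the geodesic equation degenerates to $\ddot x^1=0$, with trivially complete solutions; equivalently, on the ray $(A^1 s,A^2 s)$ with $s\in(0,\infty)$ one has $r(s)=s\sqrt{(A^1)^2+(A^2)^2}$ and $\theta\equiv\theta_0$ constant, so $t(s)$ is a constant multiple of $\log s$ and covers $\mathbb{R}$. When the line misses the origin, $r$ has a strictly positive minimum and the continuous lift $\theta(s)$ is bounded on all of $\mathbb{R}$ (its total variation is at most $\pi$), so $e^{b_2\theta(s)}$ is pinched between two positive constants; combined with the asymptotic $r(s)\sim|s|\sqrt{(A^1)^2+(A^2)^2}$ as $|s|\to\infty$, the integrand is positive and comparable to $|s|^{-1}$ at infinity, producing a logarithmic divergence that sends $t(s)\to\pm\infty$.

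The main obstacle is the bookkeeping in the degenerate first case, since a naive reading might suggest that the geodesic leaves the $\Phi$-chart at $s=0$; the resolution is that the $t$-parameter itself diverges logarithmically as $s\to 0^{\pm}$, so the geodesic is in fact defined for all $t\in\mathbb{R}$. Combining the two cases shows that every maximal geodesic of $\mathcal{M}_2^2(-1,b_2)$ has full parameter range $\mathbb{R}$, which establishes the claim.
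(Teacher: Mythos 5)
Your proof is correct, but it runs along a genuinely different track from the paper's. The paper proves Lemma~\ref{L7} by working directly in the tangent bundle: it writes the geodesic system for $(u^1,u^2)=(\dot x^1,\dot x^2)$, observes that $u^2$ cannot change sign, substitutes an explicit ansatz depending on one unknown function $\tau(t)$ satisfying $\dot\tau=u^2(\tau)$, shows $\tau$ is monotone and confined to an interval of length at most $\pi$ so that $\dot\sigma$ stays bounded and has limits at any finite endpoint, and then invokes the extension result Lemma~\ref{L6} to rule out a finite maximal parameter. You instead make explicit the projective picture that the paper only uses as background motivation: you identify $\Phi=(e^{2x^1-b_2x^2}\cos x^2,\,e^{2x^1-b_2x^2}\sin x^2)$ as the universal cover of $\mathbb{R}^2\setminus\{0\}$, derive the reparametrization law $\dot\phi^k=A^k e^{2x^1-2b_2x^2}$ (your one-line computation $\ddot\phi^k=-2\,\omega(\dot\sigma)\,\dot\phi^k$ is the right justification, and your formula checks out against the geodesic ODE, e.g.\ for $b_2=0$ it produces $\sigma(t)=(\tfrac12\log\cosh t,\arctan(\sinh t))$), and reduce completeness to the logarithmic divergence of $\int r^{-1}e^{b_2\theta}\,ds$ along lines in the punctured plane. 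The two arguments are secretly close — your bound ``total variation of $\theta$ is at most $\pi$'' is exactly the paper's ``$\tau$ is restricted to an interval of length at most $\pi$'' — but yours buys explicit geodesics and avoids Lemma~\ref{L6} entirely, at the cost of having to justify the reparametrization formula, while the paper's stays elementary at the level of ODEs. Two small points of hygiene: your $g=x^1-b_2x^2$ is the negative of the $g$ for which $e^g\in\mathcal{Q}_2^2(-1,b_2)$ (the exponential in the span is $e^{-x^1+b_2x^2}$), though your final formulas are consistent with the correct sign; and in the case of a line missing the origin you should note explicitly that $dt/ds$ is also bounded above (since $r\ge r_{\min}>0$), so that $t(s)$ is a bijection of $\mathbb{R}$ onto $\mathbb{R}$ and not merely surjective onto a half-line.
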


\begin{proof} Let $\mathcal{M}=\mathcal{M}_2^2(-1,b_2)$. 
Suppose, to the contrary, that $\MM$ is geodesically
incomplete. Let $\sigma$ be a geodesic in $\MM$ which is defined on a parameter range $(t_0,t_1)$ where $t_1<\infty$
(resp. $-\infty<t_0$)
which can not be extended to a parameter range $(t_0,t_1+\varepsilon)$ (resp. $t_0-\varepsilon$)
for any $\varepsilon>0$. By Lemma~\ref{L6},
this implies that $\lim_{t\downarrow t_0}\sigma(t)$ (resp. $\lim_{t\uparrow t_1}\sigma(t)$) does not exist.
We argue for a contradiction. The non-zero Christoffel
symbols of $\mathcal{M}$ are $\Gamma_{12}{}^1=b_2$, $\Gamma_{12}{}^2=1$, and $\Gamma_{22}{}^1=-\frac12(1+b_2^2)$. 
We work in the tangent bundle and introduce variables $u^1(t):=\dot x^1(t)$ and $u^2(t):=\dot x^2(t)$.
This yields the geodesic
equations
\begin{equation}\label{E1.2}
\dot u^1+2b_2u^1u^2-\textstyle\frac12(1+b_2^2)u^2u^2=0\text{ and }\dot u^2+2u^1u^2=0\,.
\end{equation}
If $u^2(s)=0$ for any $s\in(t_0,t_1)$, then $\dot u^1(s)=0$ and $\dot u^2(s)=0$. Consequently,
 $u^1(t)=u^1(s)$ and $u^2(t)=u^2(s)$ solves this ODE and $(u^1,u^2)$ is constant on
 the interval $(t_0,t_1)$. We may therefore assume $u^2$ does not change sign on the interval $(t_0,t_1)$. We
 want initial conditions $u^1(0)=a$ and $u^2(0)=b$. Let $\tau$ be an unknown function with $\tau(0)=1$. Set
\begin{eqnarray*}
&&\textstyle u^1(t):=e^{-b_2 \tau (t)} \left(\frac{1}{2} \left(-2 a b_2+b b_2^2+b\right) \sin (\tau (t))+a \cos (\tau (t))\right),\\
&&u^2(t):=e^{-b_2 \tau (t)} ((b b_2-2 a) \sin (\tau (t))+b \cos (\tau (t)))
\end{eqnarray*}
We then have $u^1(0)=a$ and $u^2(0)=b$. Equation~(\ref{E1.2}) then gives rise to a single
ODE to be satisfied:
$$
\dot\tau(t)=e^{-b_2 \tau (t)} (-2 a \sin (\tau (t))+b b_2 \sin (\tau (t))+b \cos (\tau (t)))
$$
or equivalently $\dot\tau(t)=u^2(\tau(t))$. Since $u^2$ does not change sign, $\tau(t)$ is restricted to a parameter interval of length at most $\pi$.
Thus $u^1$ and $u^2$ are bounded.
If $u^2$ is positive (resp negative), then $\dot\tau(t)$ is positive (resp. negative) and bounded so $\tau(t)$ is monotonically increasing
(resp. decreasing) and bounded
on the interval $(t_0,t_1)$. Thus $\lim_{t\downarrow t_0}\tau(t)$ and $\lim_{t\uparrow t_1}\tau(t)$ exist so
$\lim_{t\downarrow t_0}\dot\sigma(t)$ and $\lim_{t\uparrow t_1}\dot\sigma(t)$ exist. We integrate to conclude
$\lim_{t\downarrow t_0}\dot\sigma(t)$ and $\lim_{t\uparrow t_1}\dot\sigma(t)$ exist which provides the desired contradiction
and completes the proof. We remark that work of Bromberg and Medina~\cite{B05}  can also be used to establish this result.
\end{proof}

We summarize our results as follows; this was derived previously by D'Ascanio et al.~\cite{DGP17} using an entirely different approach.

\begin{theorem}\label{T5}
Let $\mathcal{M}$ be a Type~$\mathcal{A}$ affine surface.
\begin{enumerate}
\item Suppose $\mathcal{M}$ is flat. Then $\mathcal{M}$ is geodesically complete if and only if $\mathcal{M}$
is linearly equivalent to $\MM_0^0$ or to $\MM_4^0$. 
\item Suppose the Ricci tensor of $\mathcal{M}$ has rank 1. Then
$\mathcal{M}$ is geodesically complete if and only if $\mathcal{M}$ is linearly equivalent to $\MM_3^1(-\frac12)$.
If $\MM$ is linearly equivalent to $\MM_5^1(0)$, then $\MM$ is geodesically incomplete but has a geodesic completion $\mathcal{N}$. 
If $\MM$ is linearly equivalent to $\MM_2^1(-\frac12)$, then $\MM$ is geodesically incomplete but has the geodesic completion 
$\MM_3^1(-\frac12)$. Otherwise $\MM$ is essentially geodesically incomplete.
\item Suppose that the Ricci
tensor has rank $2$. If $\MM$ is linearly equivalent to $\MM_2^1(-1,b_2)$, then $\MM$ is geodesically complete.
Otherwise $\MM$ is essentially geodesically incomplete.
\end{enumerate}\end{theorem}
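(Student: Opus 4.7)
The plan is to assemble the result by combining the classification in Theorem~\ref{T3} with the rank-stratified analysis already carried out in the preceding lemmas. By Theorem~\ref{T3}, every Type~$\mathcal{A}$ model is linearly equivalent to one of the normal forms $\mathcal{M}_i^\nu(\cdot)$, and the Ricci tensor of $\mathcal{M}_i^\nu(\cdot)$ has rank $\nu\in\{0,1,2\}$. Since linear equivalence preserves both the rank of the Ricci tensor and geodesic (in)completeness, this cleanly splits the proof into the three cases corresponding to the three parts of the theorem.

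For part (1), the flat case $\nu=0$ is disposed of by the unnumbered lemma immediately following Theorem~\ref{Thd}: $\MM_0^0$ is the affine plane and is geodesically complete, $\MM_4^0$ is affinely diffeomorphic to $\MM_0^0$ via $\Phi_4^0$ and is therefore complete as well, while $\MM_1^0$, $\MM_2^0$, $\MM_3^0$, and $\MM_5^0$ only embed or immerse non-surjectively into $\MM_0^0$ and hence are geodesically incomplete.

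For part (2), the rank-one case $\nu=1$, Lemma~\ref{L4} provides exactly the required trichotomy: $\MM_3^1(-\tfrac12)$ is geodesically complete; $\MM_2^1(-\tfrac12)$ is incomplete but embeds affinely into $\MM_3^1(-\tfrac12)$ by Theorem~\ref{Thd}, furnishing the completion; $\MM_5^1(0)$ is incomplete but is completed by the auxiliary surface $\mathcal{N}$ constructed in the proof of Lemma~\ref{L4}; and all remaining rank-one models are essentially geodesically incomplete. For part (3), the rank-two case $\nu=2$, Lemma~\ref{L5} disposes of $\mathcal{M}_1^2(a_1,a_2)$, $\mathcal{M}_2^2(b_1,b_2)$ with $b_1\ne-1$, $\mathcal{M}_3^2(c_2)$, and $\mathcal{M}_4^2(\pm1)$ by exhibiting an explicit geodesic along which a velocity component diverges and applying Lemma~\ref{L3}, while Lemma~\ref{L7} covers the remaining one-parameter family $\mathcal{M}_2^2(-1,b_2)$.

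The main (and essentially the only) substantive obstacle in the whole assembly is the completeness of $\mathcal{M}_2^2(-1,b_2)$, which is precisely what Lemma~\ref{L7} settled by reducing the geodesic equations~(\ref{E1.2}) to a single bounded ODE for a phase variable $\tau(t)$, deducing that the velocity components not only stay bounded but have limits at the endpoints of the maximal interval, and then invoking Lemma~\ref{L6} to extend past the would-be singular time. With these ingredients already in place, the proof of Theorem~\ref{T5} is purely a bookkeeping matter: one runs through the list in Definition~\ref{D1} according to the Ricci rank determined by Theorem~\ref{T3} and cites the corresponding lemma in each case.
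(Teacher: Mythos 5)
Your proposal is correct and matches the paper exactly: the paper presents Theorem~\ref{T5} as a summary, with the proof being precisely the assembly of the classification in Theorem~\ref{T3} with the flat-case lemma, Lemma~\ref{L4} for rank one, and Lemmas~\ref{L5} and~\ref{L7} for rank two, just as you describe.
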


\noindent The geodesic structures of these models is pictured below
$$\begin{array}{cccc}
\MM_0^0&\MM_4^0&\MM_3^1(-\frac12)&\mathcal{N}\\
\includegraphics[height=2.5cm,keepaspectratio=true]{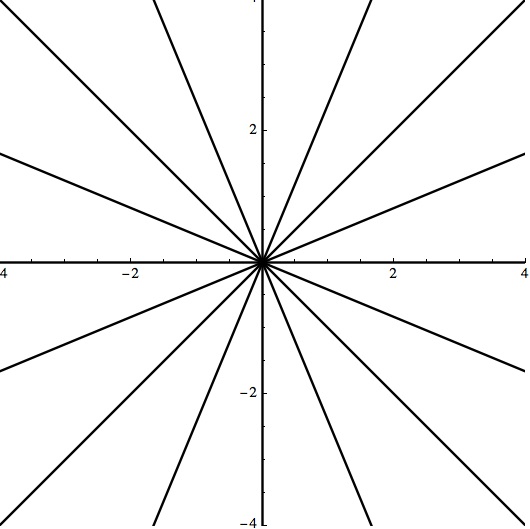}\quad&\quad
\includegraphics[height=2.5cm,keepaspectratio=true]{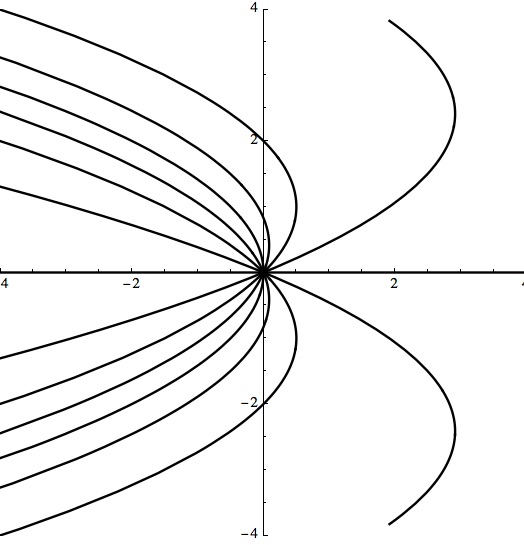}\quad&\quad
\includegraphics[height=2.5cm,keepaspectratio=true]{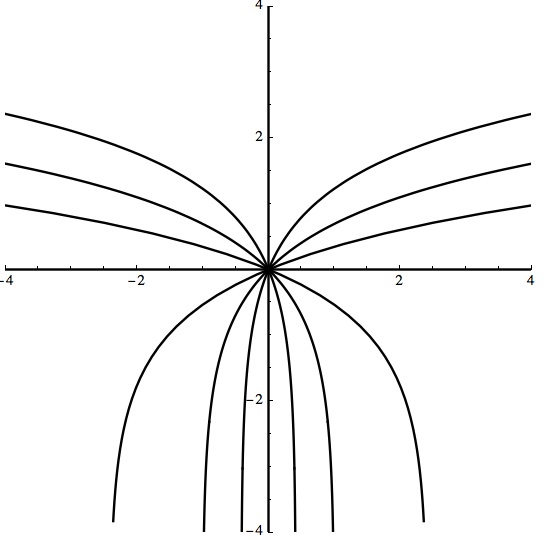}\quad&\quad
\includegraphics[height=2.5cm,keepaspectratio=true]{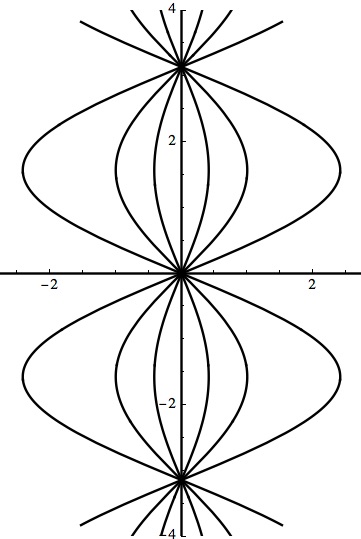}
\end{array}$$
$$
\begin{array}{ccc}
\MM_2^2(-1,0)&\MM_2^2(-1,1)&\MM_2^2(-1,2)\\
\includegraphics[height=2.5cm,keepaspectratio=true]{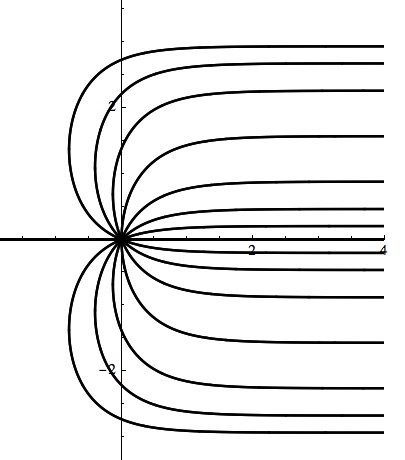}\ &\ 
\includegraphics[height=2.5cm,keepaspectratio=true]{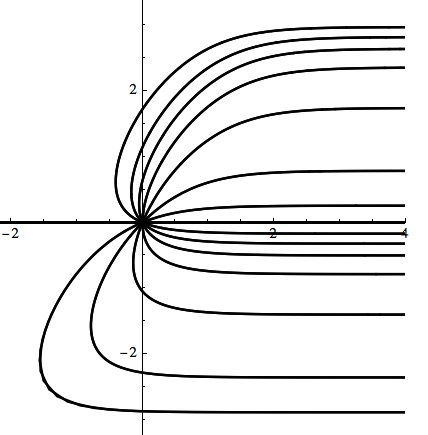}\ &\ 
\includegraphics[height=2.5cm,keepaspectratio=true]{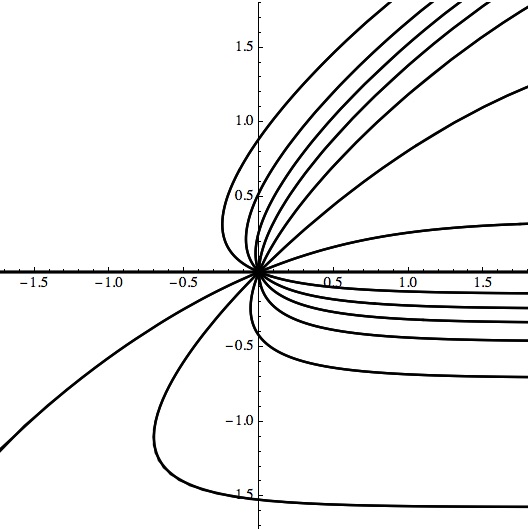}
\end{array}$$

\end{document}